\tikzstyle{vertex}=[circle, draw, inner sep=0pt, minimum size=4pt]
\newtheorem{theorem}{Theorem}[section]
\newtheorem{proposition}[theorem]{Proposition}
\newtheorem{lemma}[theorem]{Lemma}
\newtheorem{corollary}[theorem]{Corollary}
\theoremstyle{definition}
\newtheorem{definition}[theorem]{Definition}
\newtheorem{example}[theorem]{Example}
\theoremstyle{remark}
\newtheorem{remark}[theorem]{Remark}
\newcounter{x}
\newcounter{y}
\newcounter{z}
\newcommand\xaxis{210}
\newcommand\yaxis{-30}
\newcommand\zaxis{90}
\newcommand\topside[3]{
  \fill[fill=white, draw=black,shift={(\xaxis:#1)},shift={(\yaxis:#2)},
  shift={(\zaxis:#3)}] (0,0) -- (30:1) -- (0,1) --(150:1)--(0,0);
}
\newcommand\leftside[3]{
  \fill[fill=cyan, draw=black, shift={(\xaxis:#1)},shift={(\yaxis:#2)},
  shift={(\zaxis:#3)}] (0,0) -- (0,-1) -- (210:1) --(150:1)--(0,0);
}
\newcommand\rightside[3]{
  \fill[fill=lightgray, draw=black,shift={(\xaxis:#1)},shift={(\yaxis:#2)},
  shift={(\zaxis:#3)}] (0,0) -- (30:1) -- (-30:1) --(0,-1)--(0,0);
}
\newcommand\rrightside[3]{
  \fill[fill=white, draw=black,shift={(\xaxis:#1)},shift={(\yaxis:#2)},
  shift={(\zaxis:#3)}] (0,0) -- (30:1) -- (-30:1) --(0,-1)--(0,0);
}
\newcommand\cube[3]{
  \topside{#1}{#2}{#3} \leftside{#1}{#2}{#3} \rightside{#1}{#2}{#3}
}
\newcommand\ccube[3]{
  \topside{#1}{#2}{#3} \leftside{#1}{#2}{#3} 
}
\newcommand\planepartition[1]{
 \setcounter{x}{-1}
  \foreach \a in {#1} {
    \addtocounter{x}{1}
    \setcounter{y}{-1}
    \foreach \b in \a {
      \addtocounter{y}{1}
      \setcounter{z}{-1}
      \foreach \c in {0,...,\b} {
        \addtocounter{z}{1}
      \ifthenelse{\c=0}{\setcounter{z}{-1},\addtocounter{y}{0}}{
        \cube{\value{x}}{\value{y}}{\value{z}}}
      }
    }
  }
}
\newcommand\pplanepartition[1]{
 \setcounter{x}{-1}
  \foreach \a in {#1} {
    \addtocounter{x}{1}
    \setcounter{y}{-1}
    \foreach \b in \a {
      \addtocounter{y}{1}
      \setcounter{z}{-1}
      \foreach \c in {0,...,\b} {
        \addtocounter{z}{1}
      \ifthenelse{\c=0}{\setcounter{z}{-1},\addtocounter{y}{0}}{
        \ccube{\value{x}}{\value{y}}{\value{z}}}
      }
    }
  }
}
\title[]{Random plane partitions and corner distributions} 
\author[Damir Yeliussizov]{Damir Yeliussizov}
\address{KBTU, Almaty, Kazakhstan}
\email{\href{mailto:yeldamir@gmail.com}{yeldamir@gmail.com}}
\begin{document}

\begin{abstract}
We explore some probabilistic applications arising in connections with $K$-theoretic symmetric functions.
For instance, we determine certain corner distributions of random lozenge tilings and plane partitions. We also introduce some distributions that are naturally related to the corner growth model. Our main tools are dual symmetric Grothendieck polynomials and normalized Schur functions.
\end{abstract}

\maketitle


\section{Introduction}
Combinatorics arising in connection with $K$-theoretic Schubert calculus is quite rich. Accompanied by certain families of symmetric functions, it usually presents  some inhomogeneous deformations of objects beyond classical Schur (or Schubert) case. While the subject is intensively studied from combinatorial, algebraic and geometric aspects, see \cite{lenart, buch, buch1, vakil, lp, in, dy} and many references therein, 
much less is known about probabilistic connections (unlike interactions between probability and representation theory). Some work in this direction was done in \cite{ty, ms} and related problems were addressed in \cite{dy3}.\footnote{See also Remark~\ref{upd} on recent works.}

\vspace{0.5em}

In this paper, we give several probabilistic applications obtained with tools 
from combinatorial $K$-theory. We mostly focus on one 
deformation of Schur functions, the {\it dual Grothendieck polynomials}, whose associated combinatorics is fairly neat.   
In particular, we show that these functions are naturally related 
to the {\it corner growth model} \cite{joh1, joh2, sep, romik} (which can also be viewed as a totally asymmetric simple exclusion process or a directed last passage percolation), see subsec.~\ref{corner0} and Sec.~\ref{corner}. 

\vspace{0.5em}

We are now going to discuss our results across few related models.

\subsection{$\mathbb{N}$-matrices with bounded last passage time} 
A lattice path $\Pi$ with vertices indexed by $\mathbb{N}^2$ (where $\mathbb{N} = \{0,1,2,\ldots \}$) is called a {\it monotone path} 
if it uses only steps of the form $(i,j) \to (i+1, j), (i, j+1)$.
An {\it $\mathbb{N}$-matrix} is a matrix of nonnegative integers with only finitely many nonzero entries. Let $a,b,c$ be positive integers. Given  an $\mathbb{N}$-matrix $D = (d_{ij})$ with $b$ rows and $c$ columns, the {\it last passage time} $G(b,c)$ is defined as 
$$
G(b,c) = \max_{\Pi}\sum_{(i,j) \in \Pi} d_{i j},
$$
where the maximum is taken over {monotone paths}  
$\Pi$ from $(1,1)$ to $(b,c)$. 
Let $\mathrm{BM}(a,b,c)$ be the set of $b \times c$ $\mathbb{N}$-matrices whose 
last passage time is 
{\it bounded} by $a$, i.e.
\begin{align*}
\mathrm{BM}(a,b,c) := \left\{D = (d_{i j})_{i,j = 1}^{b,c} : 
\quad G(b,c) \le a,
\quad d_{ij} \in \mathbb{N} 
\right\}
\end{align*}
This set is in fact equinumerous with {\it boxed plane partitions}  (see Theorem~\ref{bij}). 

Consider the uniform probability measure on $\mathrm{BM}(a,b,c)$. For a random matrix $D \in \mathrm{BM}(a,b,c)$, define the {\it column marginals} 
$$C_{\ell} = \sum_{i = 1}^b d_{i \ell}$$ 
i.e. as the sum of entries in $\ell$-th column of $D$ where $\ell \in [1,c]$.
It is natural to ask what are distributions of $C_{\ell}$. 
We determine these distributions. The results about $C_{\ell}$ are presented via lozenge tilings (see Theorem~\ref{gam}) below. We show that the random variables $(C_{\ell})$ are exchangeable (see Theorem~\ref{sym1}; in particular, $C_{i}$ and $C_{j}$ have the same distribution for all $i, j \in [1,c]$) and obtain limiting (joint) distributions with different asymptotic regimes for the parameters $a,b,c$. In some regimes, the   variables $C_{\ell}$ become {\it asymptotically independent}. By symmetry, we also get similar results for {\it row marginals}.

\subsection{Boxed plane partitions}
A {\it plane partition} is an $\mathbb{N}$-matrix $\pi = (\pi_{i j})_{i,j\ge 1}$  
satisfying 
$$
\pi_{i j} \ge \pi_{i+1, j}, \qquad \pi_{i j} \ge \pi_{i, j+1}, \qquad i,j \ge 1.
$$
By default we ignore zeros of $\pi$. We denote by $\mathrm{sh}(\pi) := \{(i,j) : \pi_{ij} > 0 \}$ the {\it shape} of $\pi$. 

Let $\mathrm{PP}(a,b,c)$ be the set of {\it boxed} plane partitions that fit inside the box $a \times b \times c$, i.e. the first row of the shape is at most $a$, the first column is at most $b$, and the first entry is at most $c$. 
We show that $|\mathrm{PP}(a,b,c)| = |\mathrm{BM}(a,b,c)|$ (Theorem~\ref{bij}).

Consider the uniform probability measure on the set $\mathrm{PP}(a,b,c)$. Recall that classical {MacMahon's theorem} on boxed plane partitions gives the explicit product formula 
\begin{align*}
Z_{a b c} := |\mathrm{PP}(a,b,c)| = \prod_{i = 1}^{a} \prod_{j = 1}^{b} \prod_{k = 1}^{c} \frac{i + j + k - 1}{i + j + k - 2} 
\end{align*}
Note that we also have 
\begin{align*}
Z_{a b c} = s_{(a^b)}(1^{b+c}),
\end{align*}
where $s_{\lambda}$ is the {\it Schur polynomial} and $1^n := (1, \ldots, 1)$ repeated $n$ times. (See e.g. \cite[Ch.~7]{sta}.) 

For a uniformly random plane partition $\pi \in \mathrm{PP}(a,b,c)$, let 
$X_{\ell}$ be the number of 
columns of $\pi$ that contain entry $\ell \in [1,c]$.  
In fact, the variables $X_{\ell}$ become image of the marginals~$C_{\ell}$ described previously under the bijection which we describe in Sec.~\ref{sbij}. Similarly, $(X_{\ell})$ are exchangeable, and $X_i$ and $X_j$ have the same distribution for all $i,j \in [1,c]$. Theorem~\ref{gam} below (stated in terms of lozenge tilings) gives limiting distributions in several asymptotic regimes. 
By symmetry, we also get similar results for the random variables $Y_{\ell}$, the number of rows containing $\ell$ in a random plane partition. Hence the results also imply some bounds (deviation) for the {\it area} containing $\ell$.



\subsection{Lozenge tilings of a hexagon}
A {\it lozenge tiling} is a tiling of a planar domain with three types of {\it lozenges} 
\begin{tikzpicture}[scale = 0.25]
\leftside{0}{0}{0}
\end{tikzpicture}
\begin{tikzpicture}[scale = 0.25]
\topside{0}{0}{0}
\end{tikzpicture} 
\begin{tikzpicture}[scale = 0.25]
\rightside{0}{0}{0}
\end{tikzpicture}
which we refer to as {\it left}, {\it top} and {\it right} tiles. 

Let $\mathrm{LT}(a, b, c)$ be the set of {lozenge tilings}  of a 
{\it hexagon} with sides $(a, b, c, a, b, c)$ as in Fig.~\ref{bir}.
For a lozenge tiling ${T} \in \mathrm{LT}(a, b, c)$, define:
\begin{itemize}
\item 
{\it left corners} (or simply corners) of ${T}$, that are local tile configurations of the form 
\begin{tikzpicture}[scale = 0.25]
\pplanepartition{{1}}
\end{tikzpicture}
\item for each corner $\alpha$, its {\it height} $h(\alpha) \in [1,c]$ is the $z$-coordinate of the top tile of $\alpha$, 
when $T$ is viewed in $\mathbb{R}^3$ as a pile of cubes (boxed plane partition), see Fig.~\ref{bir}. 
\end{itemize}


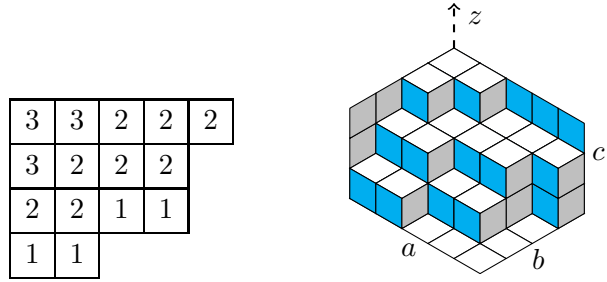
\begin{figure}
\ytableausetup{aligntableaux = bottom}
\begin{ytableau}
 3 & {3} & {2} & 2 & 2  \\
 {3} & 2 & {2} & 2\\  
 {2} & {2} & 1 & 1\\
 1 & 1
\end{ytableau}\qquad\qquad
\begin{tikzpicture}[scale = 0.4]
\foreach \c in {1,...,5} {
  \foreach \d in {1,...,3}{
	\leftside{0}{\c}{\d};
  }
}

\foreach \c in {1,...,4} {
  \foreach \d in {1,...,3}{
	\rightside{\c}{0}{\d};
  }
}

\foreach \c in {1,...,4} {
  \foreach \d in {1,...,5}{
	\topside{\c}{\d}{0};
  }
}

\planepartition{
{3,3,2,2,2},
{3,2,2,2},
{2,2,1,1},
{1,1,0,0}}

\node at (-1.5,-3.7) {$a$};
\node at (2.8,-4.0) {$b$};
\node at (4.8,-0.5) {$c$};

\draw[thick,->, dashed] (0,3) -- (0,4.5);
\node at (0.7,4) {$z$};

\end{tikzpicture}
\caption{A plane partition and the corresponding lozenge tiling of a hexagon with sides $a = 5, b = 4, c = 3$. It has 11 left corners 5 of which have height 2.}\label{bir}
\end{figure}

Consider the uniform probability measure on the set $\mathrm{LT}(a,b,c)$. We clearly have $|\mathrm{LT}(a,b,c)| = |\mathrm{PP}(a,b,c)|$, where the bijection is given by writing the height of each top tile.  
The random variables $C_{\ell}$ and $X_{\ell}$ defined for previous models translate into the following random variables for lozenge tilings.

\begin{definition}[Corner distributions]
Let $\Gamma_{\ell}$ be the number of left corners of height $\ell$.  
\end{definition}

We show the following symmetry.

\begin{theorem}[Exchangeability of levels, cf. Corollary~\ref{exch}]\label{sym1}
The random variables $(\Gamma_{\ell} )$ are exchangeable. In particular, $\Gamma_{i}$ and $\Gamma_{j}$ have the same distribution for all $i, j \in [1,c]$. 
\end{theorem}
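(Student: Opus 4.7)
The plan is to reduce exchangeability to the symmetry of a joint generating function. Consider
\begin{equation*}
Z(y_1,\ldots,y_c) \;:=\; \sum_{T \in \mathrm{LT}(a,b,c)} \prod_{\ell=1}^{c} y_\ell^{\,\Gamma_\ell(T)}.
\end{equation*}
The coefficient of $y_1^{x_1}\cdots y_c^{x_c}$ in $Z$ is the number of tilings with $\Gamma_\ell=x_\ell$ for every $\ell$, so under the uniform measure on $\mathrm{LT}(a,b,c)$ the joint law of $(\Gamma_1,\ldots,\Gamma_c)$ is invariant under permutations of coordinates if and only if $Z$ is a symmetric polynomial in $y_1,\ldots,y_c$. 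My task is therefore to exhibit $Z$ as a symmetric polynomial.

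Passing to the plane-partition side via the standard bijection $\mathrm{LT}(a,b,c)\leftrightarrow\mathrm{PP}(a,b,c)$, I first identify the statistic. A left corner at height $\ell$ of the tiling corresponds to a cell $(i,j)$ of the associated plane partition $\pi$ with $\pi_{ij}=\ell$ and $\pi_{i+1,j}<\ell$, i.e.\ to the lowest occurrence of $\ell$ in column $j$ of $\pi$; hence $\Gamma_\ell(\pi)$ is exactly the number of columns of $\pi$ that contain the entry $\ell$ (the statistic $X_\ell$ of the introduction). Next, I pad $\pi$ with zeros outside its shape to a full $b\times a$ rectangle and set
\begin{equation*}
T(i,j) \;:=\; \pi(b+1-i,\,a+1-j)+1,
\end{equation*}
obtaining a bijection $\pi\mapsto T$ between $\mathrm{PP}(a,b,c)$ and reverse plane partitions of rectangular shape $(a^b)$ with entries in $[1,c{+}1]$. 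Reversing row and column indices does not alter which values occur in each column, so with $c_i(T)$ denoting the number of columns of $T$ containing the value $i$ one has $\Gamma_\ell(\pi)=c_{\ell+1}(T)$ for $\ell=1,\ldots,c$.

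Substituting into the generating function and relabeling variables by $x_1=1,\; x_{\ell+1}=y_\ell$ yields
\begin{equation*}
Z(y_1,\ldots,y_c) \;=\; \sum_{T}\,\prod_{i=2}^{c+1} y_{i-1}^{\,c_i(T)} \;=\; g_{(a^b)}(1,\,y_1,\ldots,y_c),
\end{equation*}
where $g_\lambda$ is the dual symmetric Grothendieck polynomial, defined combinatorially as $g_\lambda(x_1,\ldots,x_n)=\sum_T \prod_i x_i^{c_i(T)}$ summed over RPPs of shape $\lambda$ with entries in $[1,n]$ weighted by columns; the specialization $x_1=1$ absorbs the free entries coming from the padded zeros. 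Since $g_{(a^b)}$ is a symmetric function in all of its arguments --- the crucial $K$-theoretic input --- any one-variable specialization of it remains symmetric in the remaining variables, so $Z$ is symmetric in $y_1,\ldots,y_c$ and exchangeability follows. The main obstacle is precisely this two-step translation: matching ``left corners at height $\ell$'' with ``columns containing $\ell$'' on the plane-partition side, and then identifying the resulting generating function with a specialization of the correct column-indexed form of $g_\lambda$; once the dictionary is in place, the known symmetry of $g_\lambda$ does all the work.
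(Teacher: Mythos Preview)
Your proof is correct and follows essentially the same route as the paper. Both arguments identify $\Gamma_\ell$ with the column statistic $X_\ell$ on plane partitions and then recognize the joint generating function as $g_{(a^b)}(1,y_1,\ldots,y_c)$, whose symmetry in $y_1,\ldots,y_c$ yields exchangeability; the paper reaches this via grouping by shape and the branching rule $g_\rho(1,\mathbf{x})=\sum_{\lambda\subset\rho}g_\lambda(\mathbf{x})$, while you obtain it by a direct bijection to rectangular fillings --- the same identity, just established bijectively. The only substantive difference is that the paper then pushes on (via the Coincidence Lemma~\ref{l1}) to the normalized Schur expression $S_\rho(x_1,\ldots,x_c;N)$, which is needed for the asymptotic Theorem~\ref{gam} but, as your argument shows, is unnecessary for exchangeability alone.
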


We find limits  of corner distributions $\Gamma$ in several asymptotic regimes.

\begin{theorem}\label{gam}
We have convergence in distribution in the following regimes:

(i) Poisson: 
for $a,b,c \to \infty$ with $ab/c \to {t} > 0$ we have
\begin{align*}
\Gamma_{\ell} \to \mathrm{Poisson}(t)
\end{align*}

(ii) Negative binomial: 
for $b$ fixed and $a,c \to \infty$ with $a/(a + c) \to {q} \in (0,1)$  we have
\begin{align*}
\Gamma_{\ell} \to \mathrm{NB}(b,q)
\end{align*}

(iii) Gaussian: 
for $a,b,c \to \infty$ with $a/(b+c) \to {u} > 0$ and $b/(b + c) \to {q} \in (0,1)$ we have 
\begin{align*}
\frac{\Gamma_{\ell} - \mu N}{\sigma\sqrt{  N}}  \to  \mathcal{N}(0,1),
\end{align*}
where $N = b + c$, $\mu = {u q}$, and $\sigma = \sqrt{u (1 + u) q (1 - q)}$.

Moreover, the joint distribution of any collection of random variables $\Gamma_{\ell}$ weakly converges to the distribution of independent random variables given in corresponding regimes (i), (ii) or (iii).
\end{theorem}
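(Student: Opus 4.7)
The engine of the proof is an explicit formula for the joint probability generating function of the $\Gamma_\ell$'s, which encodes Theorem~\ref{sym1} and from which all three limits will be extracted. Under the bijection of Section~\ref{sbij}, for any distinct $\ell_1, \ldots, \ell_r \in [1, c]$ one has
$$\mathbb{E}\Big[\prod_{j=1}^r z_j^{\Gamma_{\ell_j}}\Big] = \frac{s_{(a^b)}(1^{b+c-r}, z_1, \ldots, z_r)}{s_{(a^b)}(1^{b+c})};$$
this is symmetric in $(z_1, \ldots, z_r)$, which recovers exchangeability. Setting $r = 1$ gives the marginal PGF
$$P(z) := \mathbb{E}[z^{\Gamma_\ell}] = \frac{s_{(a^b)}(1^{b+c-1}, z)}{s_{(a^b)}(1^{b+c})},$$
which by Gelfand--Tsetlin branching in the last argument equals $\sum_{m=0}^{a} z^m \, s_{(a^{b-1}, a-m)}(1^{b+c-1}) / s_{(a^b)}(1^{b+c})$, a ratio of hook-content products. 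As a sanity check, $P'(1) = ab/(b+c)$, which matches the means $t$, $bq/(1-q)$, and $\mu N = uqN$ in the three regimes.

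The marginal limits are direct asymptotic analyses of this formula. \emph{(i)~Poisson:} with $ab = O(c)$ each factor in the product is $1 + O(1/c)$ and the factorial moments telescope to $\mathbb{E}[(\Gamma_\ell)_k] \to t^k$, giving $\mathrm{Poisson}(t)$ by the method of moments. \emph{(ii)~Negative binomial:} with $b$ fixed, the hook-content product collapses row by row and yields $P(z) \to ((1-q)/(1-qz))^b$, the PGF of $\mathrm{NB}(b,q)$. \emph{(iii)~Gaussian:} apply L\'evy continuity to the centered and scaled variable; $\log P(e^{it/(\sigma\sqrt N)})$ decomposes as a sum over cells of $(a^b)$ whose first-order contribution cancels against the centering, and whose second-order contribution is a Riemann sum converging to $-t^2/2$, identifying $\sigma^2 = u(1+u)q(1-q)$.

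The joint convergence in regimes (i) and (ii) is direct from the multi-variable product: in each case the dominant factors separate into $r$ independent blocks depending on one $z_j$ each. The principal obstacle is the joint statement in regime (iii), where both mean and variance grow linearly in $N = b+c$ and one must rule out Gaussian correlations between distinct levels. This amounts to the covariance estimate $\mathrm{Cov}(\Gamma_{\ell_i}, \Gamma_{\ell_j}) = o(N)$ for $i \ne j$, which I would prove by differentiating the logarithm of the joint PGF in $z_i$ and $z_j$ at $(1, \ldots, 1)$ and bounding the resulting cross sum of hook-content terms by the same Riemann-sum estimate that controls the variance in the marginal Gaussian limit.
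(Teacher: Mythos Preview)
Your starting point is exactly the paper's: the joint PGF of $(\Gamma_{\ell_1},\ldots,\Gamma_{\ell_r})$ is the normalized Schur polynomial $S_{(a^b)}(z_1,\ldots,z_r;b+c)$, and the marginal check $P'(1)=ab/(b+c)$ is correct. Where you diverge is in how the asymptotics are extracted. The paper does not compute any hook-content products, factorial moments, or Riemann sums; it simply quotes two external results on asymptotics of $S_\rho(x_1,\ldots,x_k;N)$ --- one from Borodin--Petrov giving the product limits $\prod e^{t(x_i-1)}$ and $\prod\bigl((1-q)/(1-qx_i)\bigr)^b$ in regimes (i) and (ii), and one from Gorin--Panova giving the Gaussian factorization in regime (iii) --- and reads off both marginal and joint convergence in two lines per regime. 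Your plan amounts to re-deriving those lemmas from scratch in the special case $\rho=(a^b)$; this is self-contained and instructive, but considerably longer, and you should be aware that the multivariate Gaussian case is the hard one in the literature.

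On that point there is a genuine soft spot in your plan. You write that asymptotic independence in regime (iii) ``amounts to the covariance estimate $\mathrm{Cov}(\Gamma_{\ell_i},\Gamma_{\ell_j})=o(N)$.'' That reduction is not valid as stated: vanishing asymptotic covariance together with marginal CLTs does not by itself imply joint convergence to independent Gaussians (one needs the joint limit to exist and to be Gaussian). What actually works is to run your L\'evy-continuity argument on the \emph{joint} characteristic function $S_\rho(e^{it_1/\sigma\sqrt N},\ldots,e^{it_r/\sigma\sqrt N};N)$ and show directly that its logarithm converges to $-\tfrac12\sum_j t_j^2$ with no cross terms; the cross-term computation is essentially your proposed second mixed derivative, but it must be carried out inside the multivariate limit rather than invoked as a separate covariance bound. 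This is precisely what the Gorin--Panova lemma delivers as a black box in the paper's proof.
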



Consider the Gaussian regime, say for $a = b = c = n \to \infty$. It is known that (with a probability going to 1) random lozenge tilings have the {\it arctic circle} phenomenon: 
outside of the inscribed circle of the hexagon, there are {\it frozen} regions, and inside there is a {\it liquid} region \cite{clp, ckp, joh02, ken}, see Fig.~\ref{ush}. 
Note that left corners on the topmost level at height $c$ form a profile for the upper frozen region. Then the number of such corners is about $n/2$ with fluctuations of order $\sqrt{n}$, which is compatible with known edge behavior. However what may seem surprising is the behavior across height levels. 
There is a kind of {invariance} there, 
the number of corners still has the same distribution on each level (even though some are inside the liquid region). Moreover, the theorem also tells that there is asymptotic independence between height levels which is different from the situation between slices parallel to the sides of the hexagon. (The levels $\ell$ in Theorem~\ref{gam} can first be viewed as fixed, and then using exchangeability in Theorem~\ref{sym1} we can let $\ell$ be arbitrary and grow.)

As it will be clear later, 
the arctic phenomenon also passes to the first object of $\mathbb{N}$-matrices with bounded last passage time, so that upper frozen boundary is recorded in the last column of a large matrix and the matrix itself has frozen regions in top left and bottom right quadrants, see Fig.~\ref{ush}.
\begin{figure}
\begin{tikzpicture}[scale = 0.15]
\foreach \c in {1,...,15} {
  \foreach \d in {1,...,15}{
	\leftside{0}{\c}{\d};
  }
}

\foreach \c in {1,...,15} {
  \foreach \d in {1,...,15}{
	\rightside{\c}{0}{\d};
  }
}

\foreach \c in {1,...,15} {
  \foreach \d in {1,...,15}{
	\topside{\c}{\d}{0};
  }
}

\planepartition{
{15, 15, 15, 15, 15, 15, 15, 15, 15, 14, 14, 13, 12, 10, 7}, 
{15, 15, 15, 15, 15, 15, 14, 14, 12, 12, 12, 12, 9, 7, 5}, 
{15, 15, 15, 15, 14, 14, 14, 13, 12, 12, 10, 8, 7, 5, 2}, 
{15, 15, 15, 14, 14, 14, 13, 13, 11, 11, 8, 7, 7, 4, 2}, 
{15, 15, 14, 14, 13, 13, 13, 11, 11, 11, 7, 7, 7, 4, 2}, 
{15, 15, 14, 14, 13, 13, 13, 11, 10, 9, 7, 5, 4, 4, 0}, 
{15, 14, 13, 13, 12, 12, 12, 10, 9, 9, 7, 5, 4, 2, 0}, 
{15, 14, 12, 12, 11, 9, 9, 8, 6, 5, 5, 3, 3, 2, 0},
{14, 13, 11, 10, 9, 8, 8, 8, 5, 5, 5, 1, 1, 0, 0}, 
{14, 13, 11, 9, 9, 6, 6, 5, 5, 5, 5, 1, 1, 0, 0}, 
{13, 12, 10, 9, 6, 5, 5, 5, 5, 5, 4, 1, 0, 0, 0}, 
{13, 12, 9, 6, 5, 5, 4, 4, 4, 4, 2, 0, 0, 0, 0}, 
{12, 11, 7, 5, 5, 4, 4, 4, 4, 3, 1, 0, 0, 0, 0}, 
{12, 10, 6, 4, 4, 4, 3, 3, 2, 1, 1, 0, 0, 0, 0}, 
{8, 7, 4, 4, 4, 3, 3, 3, 1, 1, 1, 0, 0, 0, 0}}

\draw[thick] (0,0) circle (12.9);
\end{tikzpicture}
\begin{tikzpicture}[scale = 0.24]
\draw[thick, dashed] (27.2,0) circle (7.5);
\fill[white] (27.2,0) rectangle (27.2+10,10);
\fill[white] (27.2,0) rectangle (27.2-10,-10);
\node at (27, 0) 
{
{\scriptsize 
\def\arraystretch{0.5}
\setlength\tabcolsep{1.3pt}
\begin{tabular}{|ccccccccccccccc|}
\hline
~ & ~ & ~ & ~ & ~ & ~ & 1 & 0 & 0 & 1 & 0 & 1 & 1 & 2 & 3\\
~ & ~ & ~ & ~ & 1 & 0 & 1 & 0 & 1 & 0 & 0 & 2 & 0 & 1 & 2\\
~ & ~ & ~ & ~ & 1 & 0 & 0 & 1 & 0 & 1 & 0 & 2 & 0 & 1 & 1\\
~ & ~ & ~ & ~ & 0 & 0 & 0 & 1 & 0 & 0 & 0 & 0 & 1 & 2 & 1\\
~ & 1 & 0 & 0 & 0 & 0 & 2 & 0 & 0 & 0 & 2 & 0 & 0 & 0 & 0\\
~ & 0 & 0 & 1 & 0 & 0 & 0 & 0 & 0 & 1 & 1 & 0 & 3 & 2 & 1\\
~ & 0 & 0 & 1 & 1 & 0 & 1 & 0 & 2 & 1 & 0 & 3 & 2 & 0 & 0\\
~ & 1 & 2 & 0 & 0 & 1 & 0 & 0 & 2 & 0 & 1 & 2 & 0 & 1 & 1\\
~ & 0 & 0 & 0 & 0 & 0 & 0 & 3 & 0 & 1 & 0 & 0 & 0 & 0 & ~\\
1 & 0 & 0 & 0 & 1 & 2 & 0 & 0 & 1 & 0 & 1 & 0 & 1 & 1 & ~\\
1 & 0 & 0 & 1 & 4 & 1 & 0 & 0 & 1 & 1 & 0 & 0 & 0 & ~ & ~\\
0 & 1 & 0 & 1 & 1 & 1 & 0 & 0 & 1 & 0 & 0 & 1 & 1 & ~ & ~\\
0 & 0 & 1 & 3 & 2 & 0 & 1 & 0 & 0 & 0 & 1 & 0 & ~ & ~ & ~\\
0 & 1 & 0 & 1 & 0 & 1 & 0 & 0 & 0 & 1 & 0 & 1 & ~ & ~ & ~\\
3 & 0 & 3 & 3 & 0 & 0 & 1 & 1 & ~ & ~ & ~ & ~ & ~ & ~ & ~\\
\hline
\end{tabular}
}
};
\end{tikzpicture}
\caption{A random lozenge tiling from $\mathrm{LT}(15,15,15)$ and the corresponding matrix from $\mathrm{BM}(15,15,15)$ whose blank (frozen) parts contain only zeros.}\label{ush}
\end{figure}
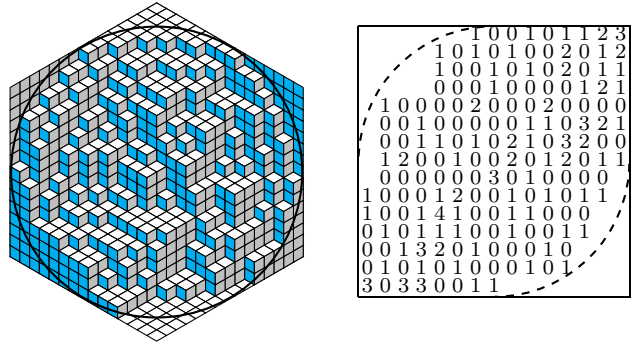

There is a general CLT for linear statistics on {\it determinantal point processes} with Hermitian symmetric correlation kernels, see \cite{sos}.  
Even though overall distributions of lozenges form a {determinantal process}, its kernel has a non-Hermitian form, see \cite{petrov}, which means that the general result cannot be applied to every lozenge statistics. 
For certain height-type statistics, asymptotic normality with logarithmic fluctuations was proved in \cite{bf}.

A well studied statistics on lozenge tilings is distributions of a particular lozenge type across corresponding slices parallel to the sides of the hexagon, see \cite{bp, petrov, gp} and references therein. In our case we consider {\it right} lozenges \begin{tikzpicture}[scale = 0.25]
\rightside{0}{0}{0}
\end{tikzpicture} (which complement left corners). We also determine joint distributions of $\Gamma$ with that statistics, see Sec.~\ref{joint}. 

We prove our results about distributions of $\Gamma$ by showing that their probability generating function can be expressed via {\it normalized} Schur polynomials which in turn follows from a result on {dual Grothendieck polynomials} defined below. 
\begin{theorem}[cf. Theorem~\ref{pgenf}]
The {\it probability generating function} of $\Gamma$ has the following explicit formula
$$
\sum_{\gamma_1, \ldots, \gamma_c \in \mathbb{N}} \mathrm{P}(\Gamma_1 = \gamma_1, \ldots, \Gamma_c = \gamma_c)\, x_1^{\gamma_1} \cdots x_c^{\gamma_c} 
= \frac{s_{(a^b)}(1^{b}, x_1, \ldots, x_c)}{s_{(a^b)}(1^{b+c})}.
$$
\end{theorem}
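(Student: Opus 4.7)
The plan is to reduce the theorem to the unnormalized polynomial identity
$$
\sum_{\pi \in \mathrm{PP}(a,b,c)} \prod_{\ell=1}^{c} x_{\ell}^{\Gamma_\ell(\pi)} \;=\; s_{(a^b)}(1^b,\, x_1, \ldots, x_c),
$$
and then to divide through by $s_{(a^b)}(1^{b+c}) = Z_{abc} = |\mathrm{PP}(a,b,c)|$, which is MacMahon's product formula recalled earlier. Writing the probability generating function as $Z_{abc}^{-1} \sum_{\pi} x^{\Gamma(\pi)}$ makes this reduction transparent.

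The first step is to recast the left-hand side as a (boxed) dual Grothendieck polynomial. The rotation $\pi \mapsto \tilde\pi$ defined by $\tilde\pi_{ij} := \pi_{b+1-i,\,a+1-j}$ is a bijection from $\mathrm{PP}(a,b,c)$ onto the set of reverse plane partitions of rectangular shape $(a^b)$ with entries in $\{0,1,\ldots,c\}$: the array $\tilde\pi$ is weakly \emph{increasing} in both indices because $\pi$ is weakly decreasing. Since the map only permutes rows and columns, the multiset of values appearing in a given column of $\pi$ coincides (after relabeling of columns) with that in a column of $\tilde\pi$, and hence $\Gamma_\ell(\pi) = c_\ell(\tilde\pi)$, where $c_\ell(T)$ is the number of columns of the reverse plane partition $T$ that contain the value $\ell$. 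By the definition of the dual Grothendieck polynomial $g_{(a^b)}$ as the generating function for RPPs under the column-counting weight, the left-hand side is then exactly $g_{(a^b)}(x_1, \ldots, x_c)$.

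The essential step is the rectangular identity
$$
g_{(a^b)}(x_1, \ldots, x_c) \;=\; s_{(a^b)}(1^b,\, x_1, \ldots, x_c),
$$
which is the promised ``result on dual Grothendieck polynomials''. The cleanest route seems algebraic, via matching Jacobi--Trudi-type determinants: the right-hand side equals $\det\bigl(h_{a+j-i}(1^b, x_1,\ldots,x_c)\bigr)_{i,j=1}^{b}$ by classical Jacobi--Trudi, and one aims to produce the same $b \times b$ determinant on the left by encoding a rectangular RPP as $b$ non-intersecting lattice paths and applying the Lindstr\"om--Gessel--Viennot lemma with step weights calibrated so that the single-path generating function evaluates to $h_{a+j-i}(1^b, x_1,\ldots,x_c)$. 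An alternative route would be to expand $g_{(a^b)}$ in the Schur basis and verify that the expansion coefficients against $s_\mu$ match those produced by the specialization $y=(1^b, x_1,\ldots,x_c)$ in $s_{(a^b)}$.

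The main obstacle is that no obvious column-by-column bijection preserves the statistic. Under the natural shift $T_{ij} = \tilde\pi_{ij} + i$, which bijects RPPs and SSYTs of shape $(a^b)$ with entries in $[1, b+c]$, the weights do \emph{not} agree term by term: for example the reverse plane partition coming from $\pi = (2,2) \in \mathrm{PP}(1,2,2)$ has $\Gamma$-weight $x_2$, whereas the associated SSYT $(3,4)$ has Schur-weight $x_1 x_2$. The identity therefore holds only after summation and relies on genuine cancellation across the ensemble of RPPs, which the determinantal (or equivalently an RSK-type) argument packages cleanly. Once the polynomial identity is established, division by $s_{(a^b)}(1^{b+c})$ yields the stated probability generating function.
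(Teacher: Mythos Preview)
Your two intermediate identities are each off by exactly one extra variable equal to $1$, and the errors happen to cancel. First, the rotation lands on reverse plane partitions of shape $(a^b)$ with entries in $\{0,1,\ldots,c\}$, and the generating function of those (with $0$ carrying no weight) is $g_{(a^b)}(1,x_1,\ldots,x_c)$, not $g_{(a^b)}(x_1,\ldots,x_c)$: already for $a=b=c=1$ the left side equals $1+x_1$ (the empty plane partition contributes the constant term) while $g_{(1)}(x_1)=x_1$. Second, the correct rectangular coincidence is $g_{(a^b)}(\mathbf{y})=s_{(a^b)}(1^{\,b-1},\mathbf{y})$, with $b-1$ ones rather than $b$; your version $g_{(a^b)}(x_1,\ldots,x_c)=s_{(a^b)}(1^b,x_1,\ldots,x_c)$ fails in the same tiny case, since $s_{(1)}(1,x_1)=1+x_1\ne x_1$. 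Put together correctly, the chain reads
\[
\sum_{\pi\in\mathrm{PP}(a,b,c)} \mathbf{x}^{X(\pi)}
= g_{(a^b)}(1,x_1,\ldots,x_c)
= s_{(a^b)}(1^{\,b-1},1,x_1,\ldots,x_c)
= s_{(a^b)}(1^{\,b},x_1,\ldots,x_c),
\]
which is exactly the paper's route: it groups plane partitions by shape to get $\sum_{\lambda\subset(a^b)} g_\lambda(\mathbf{x})$, uses the branching rule $g_{(a^b)}(1,\mathbf{x})=\sum_{\lambda\subset(a^b)} g_\lambda(\mathbf{x})$, and then invokes the coincidence lemma $g_{(a^b)}=s_{(a^b)}(1^{\,b-1},\cdot)$. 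Your rotation is just a repackaging of that branching step (the zeros play the role of the extra variable set to $1$).

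For the coincidence lemma itself, the paper's proof is much shorter than the LGV argument you sketch: it simply observes that for $\lambda=(a^b)$ the known Jacobi--Trudi formula $g_\lambda=\det\bigl[e_{\lambda'_i-i+j}(1^{\lambda'_i-1},\mathbf{x})\bigr]_{i,j=1}^{\lambda_1}$ specializes to $\det\bigl[e_{b-i+j}(1^{\,b-1},\mathbf{x})\bigr]_{i,j=1}^{a}$, which is the classical $e$-Jacobi--Trudi determinant for $s_{(a^b)}(1^{\,b-1},\mathbf{x})$. If you want to run your $h$-determinant/LGV approach instead, be sure to target $s_{(a^b)}(1^{\,b-1},\cdot)$, not $s_{(a^b)}(1^{\,b},\cdot)$.
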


\subsection{Dual Grothendieck polynomials} 
Our results crucially rely on properties of these symmetric functions. The {\it dual symmetric Grothendieck polynomials} $g_{\lambda}$ can be defined via the following combinatorial presentation
$$
g_{\lambda}(x_1, x_2, \ldots ) = \sum_{T \in \mathrm{PP}(\lambda)} \prod_{i \ge 1} x_i^{c_i(T)},
$$
where the sum runs over {plane partitions} $T$ of shape $\lambda$ and $c_i(T)$ is the number of columns of $T$ containing $i$.
These polynomials were explicitly introduced and studied in \cite{lp} (and earlier implicitly in \cite{lenart, buch}) and they are related to $K$-homology of Grassmannians. 
More properties of these functions can be found in \cite{dy, dy2}.
The polynomials $g_{\lambda}$ can be considered as an inhomogeneous $K$-theoretic deformation of Schur polynomials. By definition, it is easy to see that the top degree homogeneous component of $g_{\lambda}$ is the Schur polynomial $s_{\lambda}$, i.e. $g_{\lambda} = s_{\lambda} + \text{lower degree terms}$. 

\subsection{$g$-measure on plane partitions} Let $q_1, \ldots, q_c \in (0,1)$. For a plane partition $\pi$ define the {\it descent set}
$$
\mathrm{Des}(\pi) := \{(i,j) : \pi_{i j} > \pi_{i+1, j} \}
$$
and let $\mathrm{des}(\pi) = |\mathrm{Des}(\pi)|$. 

Consider the probability distribution $\mathrm{P}_{g,b,c}$, which we call {\it $g$-measure} or {\it $g$-distribution}, on the (infinite) set $\mathrm{PP}(\infty, b, c)$ of plane partitions with at most $b$ rows and maximal entry at most $c$, defined as follows:
$$
\mathrm{P}_{g,b,c}(\pi) := \frac{1}{Z_{b,c}} \prod_{(i,j) \in \mathrm{Des}(\pi)} q_{\pi_{i j}},
$$
where the associated partition function (normalization) can be computed as (see Corollary~\ref{znorm})
$$
Z_{b,c} = \prod_{i = 1}^{c} (1 - q_i)^{-b}.
$$

\begin{proposition}[cf. Proposition~\ref{gme}]
Let $\pi \in \mathrm{PP}(\infty, b, c)$ and $\lambda = (\lambda_1 \ge \cdots \ge \lambda_b \ge 0)$. We have 
$$
\mathrm{P}_{g,b,c}(\mathrm{sh}(\pi) = \lambda) = \frac{1}{Z_{b,c}}\, g_{\lambda}(q_1, \ldots, q_c).
$$ 
\end{proposition}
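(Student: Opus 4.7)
The plan is to compare the two weighted sums term by term, by showing that on any single plane partition the descent statistic and the column-content statistic produce the same monomial.

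First, I would observe that the restriction $\mathrm{sh}(\pi) = \lambda$ on elements of $\mathrm{PP}(\infty, b, c)$ simply cuts out the plane partitions of shape $\lambda$ (automatically having at most $b$ rows, since $\lambda$ has at most $b$ parts) with entries bounded by $c$. This is precisely the indexing set of $g_\lambda(q_1,\ldots,q_c)$ in its combinatorial presentation from the previous subsection.

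The heart of the proof is the identity
\begin{equation*}
\prod_{(i,j) \in \mathrm{Des}(\pi)} q_{\pi_{ij}} = \prod_{v=1}^{c} q_v^{c_v(\pi)},
\end{equation*}
which I would establish column by column, using the convention $\pi_{ij} = 0$ for cells outside $\mathrm{sh}(\pi)$. Fix a column $j$; its entries form a weakly decreasing sequence $\pi_{1j} \ge \pi_{2j} \ge \cdots \ge \pi_{kj} > 0 = \pi_{k+1,j} = \cdots$. A descent in column $j$ is an index $i$ with $\pi_{ij} > \pi_{i+1,j}$, which includes the final drop from $\pi_{kj}$ to $0$. As $i$ ranges over the descents in column $j$, the values $\pi_{ij}$ run through exactly the distinct positive values appearing in that column, each occurring precisely once, at the lowest row in which it appears. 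Hence the descent product restricted to column $j$ equals $\prod_{v \in \mathrm{col}\, j} q_v$, and multiplying over $j$ records each value $v \in [1,c]$ with exponent equal to the number of columns of $\pi$ containing $v$, which is $c_v(\pi)$.

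With this identity in hand, the conclusion is immediate: summing over $\pi$ of shape $\lambda$,
\begin{equation*}
\mathrm{P}_{g,b,c}(\mathrm{sh}(\pi) = \lambda) = \frac{1}{Z_{b,c}} \sum_{\pi:\, \mathrm{sh}(\pi) = \lambda} \prod_{v=1}^{c} q_v^{c_v(\pi)} = \frac{1}{Z_{b,c}}\, g_\lambda(q_1,\ldots,q_c),
\end{equation*}
directly from the combinatorial definition of $g_\lambda$. There is no real analytic or algebraic obstacle; the only delicate point is the boundary convention that turns the bottom positive entry of every nonempty column into a descent, which is precisely what ensures that the smallest distinct value in that column gets registered as a $q$-label of some descent.
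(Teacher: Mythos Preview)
Your proof is correct and follows essentially the same approach as the paper: both sum the $g$-measure weight over plane partitions of shape $\lambda$ and identify the result with the combinatorial definition of $g_\lambda$. The paper compresses the key step into the phrase ``it is not difficult to see,'' while you spell out explicitly why the descent product $\prod_{(i,j)\in\mathrm{Des}(\pi)} q_{\pi_{ij}}$ equals $\prod_v q_v^{c_v(\pi)}$ via the column-by-column argument, which is exactly the content hidden behind that phrase.
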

By this formula it is convenient to view $\mathrm{P}_{g,b,c}$ as a distribution on integer partitions $\lambda$ with at most $b$ parts. We write $\mathrm{P}_{g,b,c}(\lambda)$ to mean this distribution. This measure is somewhat similar to the {\it Schur measure} \cite{ok, or} as it relies on the Cauchy-type identity 
for dual Grothendieck polynomials (which is a special case of a more general Cauchy identity involving two families of symmetric Grothendieck functions \cite{dy2}). 

\begin{theorem}[cf. Theorem~\ref{fpd}] 
For $\pi \in \mathrm{PP}(\infty, b, c)$, let $\lambda = (\lambda_1 \ge \cdots \ge \lambda_b \ge 0)$ be the shape of $\pi$.  
The distribution of the first part $\lambda_1$ can be 
expressed as 
\begin{align*} 
\mathrm{P}_{g,b,c}(\lambda_1 \le a) 
= \frac{1}{Z_{b,c}} s_{(a^b)}(1^{b}, q_1, \ldots, q_c). 
\end{align*}
\end{theorem}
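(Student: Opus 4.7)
The plan is to combine the $g$-measure shape formula from the previous proposition with the probability generating function identity for $\Gamma_{\ell}$ already stated in the excerpt, viewing both through the common lens of boxed plane partitions weighted by column-level occurrences. The key reduction will be to the summation identity $\sum_{\lambda \subseteq (a^b)} g_{\lambda}(q_1, \ldots, q_c) = s_{(a^b)}(1^b, q_1, \ldots, q_c)$, which in turn follows from the PGF theorem.

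First I would apply the previous proposition to rewrite the cumulative distribution as
\begin{equation*}
\mathrm{P}_{g,b,c}(\lambda_1 \le a) = \sum_{\lambda \subseteq (a^b)} \mathrm{P}_{g,b,c}(\mathrm{sh}(\pi) = \lambda) = \frac{1}{Z_{b,c}} \sum_{\lambda \subseteq (a^b)} g_\lambda(q_1, \ldots, q_c),
\end{equation*}
using that $\lambda$ has at most $b$ parts under $\mathrm{P}_{g,b,c}$, so $\{\lambda_1 \le a\}$ is exactly the event $\{\lambda \subseteq (a^b)\}$. Then I would unfold the combinatorial definition $g_\lambda(q_1, \ldots, q_c) = \sum_{T \in \mathrm{PP}(\lambda)} \prod_i q_i^{c_i(T)}$ and swap the order of summation. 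Since a pair $(\lambda, T)$ with $\mathrm{sh}(T) = \lambda \subseteq (a^b)$ and entries in $[1,c]$ is the same datum as an element of $\mathrm{PP}(a,b,c)$, this collapses the right hand side to
\begin{equation*}
\frac{1}{Z_{b,c}} \sum_{T \in \mathrm{PP}(a,b,c)} \prod_{i=1}^{c} q_i^{c_i(T)}.
\end{equation*}

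Finally I would invoke the probability generating function formula for the corner variables $\Gamma_i$ (Theorem~\ref{pgenf}). Since $\Gamma_i$ evaluated on a uniformly random $T \in \mathrm{PP}(a,b,c)$ equals $c_i(T)$, multiplying that PGF identity through by $|\mathrm{PP}(a,b,c)| = s_{(a^b)}(1^{b+c})$ yields
\begin{equation*}
\sum_{T \in \mathrm{PP}(a,b,c)} \prod_{i=1}^{c} q_i^{c_i(T)} = s_{(a^b)}(1^{b}, q_1, \ldots, q_c),
\end{equation*}
and substituting this into the previous display gives the claimed formula.

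The main obstacle is really hidden inside Theorem~\ref{pgenf}, not in the bookkeeping above: the weight $\prod q_i^{c_i(T)}$ depends on a non-local column-containment statistic of $T$, and identifying its generating function with a Schur specialization on $b+c$ variables is the genuinely nontrivial step. A proof of that identity could proceed via a weight-preserving bijection between $\mathrm{PP}(a,b,c)$ and semistandard Young tableaux of shape $(a^b)$ with entries in $[1, b+c]$, pairing columns of $T$ containing $i$ with tableau cells of value $b+i$; alternatively, both sides can be realised as determinants via Lindstr\"om--Gessel--Viennot on the plane partition side and Jacobi--Trudi on the Schur side and compared directly.
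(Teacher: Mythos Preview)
Your proposal is correct, but it takes a slightly more roundabout path than the paper. Both arguments begin identically by reducing to the summation identity $\sum_{\lambda \subseteq (a^b)} g_{\lambda}(q_1,\ldots,q_c) = s_{(a^b)}(1^b,q_1,\ldots,q_c)$. You then establish this by unfolding the combinatorial definition of $g_\lambda$ and invoking Theorem~\ref{pgenf} as a black box. The paper instead uses the one-variable branching rule $g_{(a^b)}(1,\mathbf{x}) = \sum_{\lambda \subseteq (a^b)} g_\lambda(\mathbf{x})$ (immediate from the combinatorial definition) followed by the Coincidence Lemma~\ref{l1}, $g_{(a^b)}(\mathbf{x}) = s_{(a^b)}(1^{b-1},\mathbf{x})$, which is proved by matching Jacobi--Trudi determinants. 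Since in the paper's logical structure Theorem~\ref{pgenf} is itself derived from this same identity~\eqref{gx}, your route and the paper's are equivalent in content; the paper's is simply the shorter direct path. Your closing remark correctly identifies that the real work lies in an identity like~\eqref{gx}, and your suggested Jacobi--Trudi comparison is precisely how the paper proves it.
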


In particular, this formula implies Toeplitz (and Fredholm) determinantal expressions for this distribution, see Sec.~\ref{gmeas}. 

\subsection{Corner growth model}\label{corner0}
Let $W = (w_{ij})_{i,j\ge 1}$ be a random matrix with i.i.d. entries $w_{ij}$ that have geometric distribution with parameter $q \in (0,1)$, i.e. 
$$\mathrm{P}(w_{ij} = k)  = (1 - q) q^k, \quad k \in \mathbb{N}.$$ 
Consider the {last passage times} for all $m,n \ge 1$ 
$$
G(m,n) = \max_{\Pi} \sum_{(i,j) \in \Pi} w_{ij},
$$ 
where the maximum is over monotone paths $\Pi$ from $(1,1)$ to $(m,n)$. 

The {\it corner growth model} can be viewed as evolution of a {random Young diagram} $Y(t)$ given at time $t$ by the region
$$
Y(t) = \{(i,j) \in \mathbb{N}^2 : G(i,j) \le t \}.
$$
The matrix $G = (G(i,j))_{i,j\ge 1}$ is called the {\it performance table}. This model was studied in \cite{joh1, bar}, see also \cite{joh2,  sep, romik} and references therein for more on the topic. In Sec.~\ref{corner} we review and use some known results.

The main relationship between this stochastic process and the $g$-distribution defined above is the following.
\begin{theorem}[cf. Theorem~\ref{main}]\label{main1}
Let $\lambda = (\lambda_1, \ldots, \lambda_b)$ be a partition with at most $b$ parts. 
We have
\begin{align*}
\mathrm{P}(G({b,c}) = \lambda_1, \ldots, G(1,c) = \lambda_b) = \mathrm{P}_{g,b,c}(\lambda), 
\end{align*}
where $q_i = q$ for all $i \in [1,c]$ in the $g$-measure.
\end{theorem}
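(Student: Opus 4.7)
The plan is to deduce Theorem~\ref{main1} from a statistic-preserving bijection between $\mathbb{N}$-matrices and plane partitions, combined with the combinatorial definition of $g_\lambda$. This is the unbounded-LPP version of the bijection underlying Theorem~\ref{bij}.

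First I would establish a bijection
\[
\Phi : \mathbb{N}^{b \times c} \;\longrightarrow\; \bigsqcup_{\lambda,\, \ell(\lambda) \le b} \{\pi : \mathrm{sh}(\pi) = \lambda,\ \pi_{ij} \in [1,c]\}
\]
with two properties: \emph{(P1)} if $\Phi(W) = \pi$, then $\mathrm{sh}(\pi) = (G(b,c), G(b-1,c), \ldots, G(1,c))$; \emph{(P2)} $\sum_{i,j} w_{ij} = \mathrm{des}(\pi)$. This is the bijection developed in Sec.~\ref{sbij}, specifically the $a \to \infty$ version of the bijection in Theorem~\ref{bij}; the bound $a$ there only restricts the first row length of the output, so it plays no other role in the construction. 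Property (P1) is the key combinatorial content: by a Greene-type principle built into the construction, the $k$-th row length of the output plane partition records the last-passage time $G(b-k+1,c)$ along the right column of $W$. Property (P2) is a weight-matching that falls out of the local description of $\Phi$.

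Once $\Phi$ is in hand the theorem is immediate. For any partition $\lambda = (\lambda_1 \ge \cdots \ge \lambda_b \ge 0)$,
\begin{align*}
\mathrm{P}(G(b,c) = \lambda_1, \ldots, G(1,c) = \lambda_b)
&= \sum_{W :\, \mathrm{sh}(\Phi(W)) = \lambda} (1-q)^{bc}\, q^{\sum_{i,j} w_{ij}} \\
&= (1-q)^{bc} \sum_{\pi :\, \mathrm{sh}(\pi) = \lambda,\ \pi_{ij} \in [1,c]} q^{\mathrm{des}(\pi)} \\
&= (1-q)^{bc}\, g_\lambda(\underbrace{q, \ldots, q}_{c}),
\end{align*}
where the last step applies the definition $g_\lambda(x_1, \ldots, x_c) = \sum_\pi \prod_i x_i^{c_i(\pi)}$ at $x_i = q$, together with the elementary identity $\sum_i c_i(\pi) = \mathrm{des}(\pi)$ (each strict vertical drop in column $j$ corresponds to exactly one distinct positive value occurring in that column). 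By Proposition~\ref{gme} and Corollary~\ref{znorm}, the right-hand side equals $\mathrm{P}_{g,b,c}(\lambda)$ when all $q_i = q$, finishing the proof.

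The hard part is verifying property (P1): that the row lengths of $\Phi(W)$ are precisely the sequence of last-passage times $(G(b,c), G(b-1,c), \ldots, G(1,c))$. This is the substantive combinatorial claim on the bijection; once it is known, property (P2), the weight-matching computation, and the final reduction to $g_\lambda$ are routine bookkeeping.
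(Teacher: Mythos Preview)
Your approach is essentially the paper's: property (P1) is Lemma~\ref{phip}(ii), property (P2) together with the weight computation is Lemma~\ref{gen}, and the final identification with $\mathrm{P}_{g,b,c}(\lambda)$ is Proposition~\ref{gme}. One small indexing point: Lemma~\ref{phip}(ii) actually gives $\lambda_k$ as the last-passage time from $(k,1)$ to $(b,c)$ in $D=\Phi(\pi)$, not as $G(b-k+1,c)$; these coincide in joint distribution because row-reversal preserves the law of an i.i.d.\ matrix, which is all the theorem requires, but your (P1) as written is not a pointwise property of the bijection from Sec.~\ref{sbij}.
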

This formula is useful for both objects that it relates, i.e. as a formula for row distributions of the performance table as well as a source for new properties of dual Grothendieck polynomials. 

\subsection{On methods and tools}
Random plane partitions were studied intensively and there is large literature on the subject, see \cite{clp, joh2, or, jn, ken, bp, petrov, gp} and many references therein. Yet, exploiting tools from combinatorial $K$-theory
turns out to be beneficial for establishing new properties.
Let us summarize the key ingredients crucial to our results. One of them is the bijection 
between plane partitions and $\mathbb{N}$-matrices, 
described in the following section. This map is different from the well-known Robinson-Schensted-Knuth (RSK) correspondence; its description is somewhat simpler but has analogous properties. 
Another key tool is in dual Grothendieck polynomials whose combinatorics is tied to plane partitions. 
As we see, these functions are useful for uncovering new properties and symmetries for some natural statistics on plane partitions, which seem to be difficult to obtain otherwise.
The determinantal formulas  
in certain cases allow us to make a transition to Schur polynomials, which is non-trivial at the combinatorial level. Combined with this interesting coincidence (see Lemma~\ref{l1}), we then use known asymptotics for normalized Schur polynomials from \cite{bp, gp}.

\section{A bijection between plane partitions and $\mathbb{N}$-matrices}\label{sbij}
Given a plane partition $\pi$, define the {\it descent level sets} 
\begin{align}\label{ldes}
D_{i \ell} := \{ j : \pi_{i j} = \ell  > \pi_{i+1, j} \},
\end{align}
i.e. $D_{i \ell}$ is the set of column indices of the entry $\ell$ in $i$th row of $\pi$ that are strictly larger than the entry below. 

Before proceeding, let us mention some important properties of these sets: 
\begin{itemize}
\item The elements in $D_{i \ell}$ form a consecutive segment.
\item For $(i,\ell) \ne (i_1, \ell_1)$ such that $i \ge i_1$, $\ell \ge \ell_1$, the sets $D_{i \ell}$ and $D_{i_1 \ell_1}$ are {\it disjoint} (more precisely, every element in $D_{i_1 \ell_1}$ is larger than any element of $D_{i \ell}$).
\end{itemize}

Let now $d_{i \ell} := |D_{i \ell}|$ and $D := (d_{i \ell})_{i, \ell \ge 1}$.
Define the map $\Phi : \{\text{plane partitions}\} \to \{\mathbb{N}\text{-matrices}\}$ by setting 
\begin{align}\label{piq}
\Phi(\pi) = D.
\end{align}

\begin{example}\label{ex1} Consider $\Phi: \pi \mapsto D$ as follows
\begin{center}
$\Phi : $
\ytableausetup{aligntableaux = center}
\begin{ytableau}
 4 & {4} & {2} \\
 {4} & 2 & {1} \\  
 {2} & {2}
\end{ytableau}
$~\longmapsto
\left(
\begin{matrix}
0 & 1 & 0 & 1\\
1 & 0 & 0 & 1\\
0 & 2 & 0 & 0\\
\end{matrix}
\right)
$
\end{center}
\end{example}

It is convenient to view the map $\Phi$ geometrically: present $\pi$ as a pile of cubes in $\mathbb{R}^3$; then
mark corners 
\begin{tikzpicture}[scale = 0.25]
\pplanepartition{{1}}
\end{tikzpicture}
on the surface of $\pi$ with $\bullet$ as in Fig.~\ref{figa}. Then $d_{i \ell}$ is the number of such marks of height $z = \ell$ and 
$x = i$. Alternatively, we may view this as a projection of these marks on the $y = 0$ plane, see Fig.~\ref{figa}.

\begin{figure}
\begin{tikzpicture}[scale = 0.45]
\planepartition{{4,4,2},{4,2,1},{2,2}}
\draw[thick, dashed,->] (0,4) -- (0,5);
\node at (0.5,5) {$z$};
\draw[thick, dashed,->] (-2.6,-1.5) -- (-3.6, -2.1); 
\node at (-4,-2.1) {$x$};
\draw[thick, dashed,->] (2.6,-1.5) -- (3.6, -2.1); 
\node at (4,-2.1) {$y$};

\foreach \c in {1,...,3} {
  \foreach \d in {1,...,4}{
	\rrightside{\c}{-3}{\d};
  }
}
\draw[thick, dashed,->] (-2.6-2.7,-1.5+1.5) -- (-3.6-2.7, -2.1+1.5); 
\node at (-4-2.7,-2.1+1.5) {$i$};

\draw[thick, dashed,->] (0-2.6,4+1.5) -- (0-2.6,5+1.5);
\node at (0.5-2.7,5+1.5) {$\ell$};

\node at (1.4,0.25) {{\scriptsize {$\bullet$}}};
\draw[dashed, gray] (1.4, 0.2) to (-3.5, 3);
\node at (-3.65,3.25) {{\scriptsize $1$}};
\node at (-3.65+0.2,3.25-0.25) {{\scriptsize $\bullet$}};

\node at (-1.2,-0.25) {{\scriptsize {$\bullet$}}};
\draw[dashed, gray] (1.4-2.65, 0.2-0.45) to (-3.5-2.65+1, 3-0.45-0.6);
\node at (-3.5-2.65+1-0.3, 3-0.45-0.6+0.25) {{\scriptsize $2$}};
\node at (-3.5-2.65+1-0.05, 3-0.45-0.6+0.05) {{\scriptsize $\bullet$}};

\node at (-2.1,0.25) {{\scriptsize {$\bullet$}}};

\node at (-1.2,2.75) {{\scriptsize {$\bullet$}}};
\draw[dashed, gray] (1.4-2.6, 0.2+2.5) to (-3.5-2.6+1.75, 3+2.5-1);
\node at (-3.5-2.6+1.75, 3+2.5-1) {{\scriptsize {$\bullet$}}};
\node at (-3.5-2.6+1.75-0.25, 3+2.5-1+0.25) {{\scriptsize {$1$}}};

\node at (0.5,2.75) {{\scriptsize {$\bullet$}}};
\draw[dashed, gray] (1.4-0.9, 0.2+2.5) to (-3.5-0.9+0.95, 3+2.5-0.5);
\node at (-3.5-0.9+0.95, 3+2.5-0.5) {{\scriptsize {$\bullet$}}};
\node at (-3.5-0.9+0.95-0.25, 3+2.5-0.5+0.25) {{\scriptsize {$1$}}};

\node at (-3.5-0.9+0.95-0.25-3, 3+2.5-0.5+0.25-1) {{\scriptsize {$D = (d_{i \ell})$}}};

\node at (0.5,-1.25) {{\scriptsize {$\bullet$}}};
\draw[dashed, gray] (1.4-0.9, 0.2-1.5) to (1.4-2.65, 0.2-0.45);
\node at (-3.5-2.65+1-0.25+1-0.1, 3-0.45-0.6+0.25-1+0.55) {{\scriptsize $1$}};
\node at (-3.5-2.65+1-0.25+1+0.08, 3-0.45-0.6+0.25-1+0.3) {{\scriptsize $\bullet$}};
\end{tikzpicture}
\caption{A geometric view of the map $\Phi$. The dots $\bullet$ correspond to left corners.}\label{figa}
\end{figure}
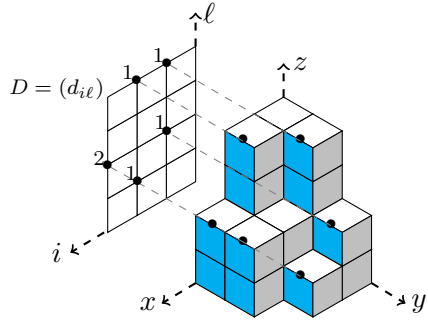

\begin{theorem}\label{bij}
The map $\Phi$ defines a bijection between the set $\mathrm{PP}(a,b,c)$ of boxed plane partitions and $\mathrm{BM}(a,b,c)$ of $\mathbb{N}$-matrices with bounded last passage time.
\end{theorem}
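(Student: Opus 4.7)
The plan is to verify three points: (i) $\Phi$ is well-defined into $b \times c$ $\mathbb{N}$-matrices; (ii) $\Phi$ has an explicit inverse $\Psi$ built row by row from the bottom up; (iii) the box condition $\lambda_1(\pi) \le a$ matches the last-passage condition $G(b,c) \le a$ via the identity $G(b,c) = \lambda_1(\pi)$, where $\lambda_1$ is the length of the first row of $\pi$.

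Point (i) is immediate from the two stated structural properties of the descent level sets: $d_{i\ell} \in \mathbb{N}$, and $d_{i\ell} = 0$ whenever $i > b$ or $\ell > c$. For (ii), given $D = (d_{i\ell})$, I would construct $\pi = \Psi(D)$ inductively from row $b$ upward. Row $b$ is the unique weakly-decreasing arrangement containing $d_{b\ell}$ copies of each $\ell$. For $i < b$, given rows $i+1, \ldots, b$, row $i$ is the unique weakly-decreasing sequence $\pi_{i,\bullet} \ge \pi_{i+1,\bullet}$ whose strict-descent positions consist of $d_{i\ell}$ consecutive entries of value $\ell$ for each $\ell$, with higher values preceding lower values. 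The consecutiveness and ordered-disjointness properties of $D_{i\ell}$ pin down this merge uniquely, and a direct check gives $\Phi \circ \Psi = \mathrm{id}$ and $\Psi \circ \Phi = \mathrm{id}$.

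For (iii), the upper bound $G(b,c) \le \lambda_1$ is an antichain argument: if column $j$ of $\pi$ has distinct positive values $v_1 > \cdots > v_k$ at last-row indices $R_1 < \cdots < R_k$, the tokens $\{(R_s, v_s)\}_s$ that column $j$ contributes to $D$ form an antichain in the product order on $[b] \times [c]$; since any monotone path from $(1,1)$ to $(b,c)$ is a chain in this order, it picks up at most one token per non-empty column of $\pi$, totalling at most $\lambda_1$. For the lower bound, I build a chain of tokens by processing columns $j = \lambda_1, \lambda_1 - 1, \ldots, 1$: given a previously chosen token $(R, V)$ from column $j+1$ (so $\pi_{R, j+1} = V$), set $V' := \pi_{R, j}$ and let $R'$ be the last row of column $j$ containing the value $V'$. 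The coordinatewise domination $\pi_{\bullet, j} \ge \pi_{\bullet, j+1}$ gives $V' \ge V$, and the maximality of $R'$ gives $R' \ge R$, so $(R', V')$ is a column-$j$ token dominating $(R, V)$ in product order. Connecting the resulting $\lambda_1$ chain cells by monotone sub-paths produces a monotone path of weight $\ge \lambda_1$.

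The main obstacle I expect is step (ii): showing the row-by-row merge is uniquely determined and that $\Phi$ applied to the reconstructed $\pi$ returns the original $D$ requires careful bookkeeping with both structural properties of the $D_{i\ell}$, especially ensuring that the consecutive descent blocks of different values line up in the correct order across the row. Once this is in place, the chain/antichain arguments for (iii) reduce to short observations using only the first-row/column monotonicity of $\pi$.
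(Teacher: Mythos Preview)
Your proposal is correct, and your argument for (iii) --- the antichain upper bound and the greedy column-by-column chain for the lower bound --- is essentially the paper's argument recast in order-theoretic language (the paper phrases the upper bound as disjointness of the sets $D_{i_s\ell_s}$ along a monotone path, and the lower bound by tracing the first-row entries of $\pi$ to a monotone path).

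Where you genuinely diverge is in (ii). The paper constructs $\Phi^{-1}$ by an \emph{insertion procedure}: it scans the columns of $D$ from $\ell=c$ down to $\ell=1$, and for each $\ell$ and each $i=b,\ldots,1$ it ``adds $\ell$ in row $i$'' to the growing tableau $d_{i\ell}$ times (pushing a column of $\ell$'s into the first available slot). Your inverse instead reconstructs $\pi$ \emph{row by row from the bottom}, using that row $i$ is uniquely determined by row $i{+}1$ together with the descent counts $(d_{i\ell})_\ell$. Your approach is more structural: in conjugate coordinates it amounts to the recursion $\nu'_\ell=\max(\nu'_{\ell+1},\mu'_\ell)+d_{i\ell}$ (with $\nu,\mu$ the $i$-th and $(i{+}1)$-st rows), which immediately gives existence and uniqueness and, incidentally, makes the identity $\lambda_1=G(b,c)$ fall out as a last-passage recurrence. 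The paper's insertion description is more algorithmic and visually concrete but requires a separate contradiction argument to verify $\lambda_1\le a$. Either route works; yours makes the bookkeeping in (ii) lighter than you anticipate once you pass to the conjugate-coordinate recursion.
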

\begin{proof}
Suppose $\pi \in \mathrm{PP}(a,b,c)$. Let us show that $D = \Phi(\pi) \in \mathrm{BM}(a,b,c)$. By construction, it is clear that $D$ has at most $b$ rows and $c$ columns. 
Take any monotone path $(1,1) = (i_1, \ell_1) \to \cdots \to (i_k, \ell_k) \le (b,c)$ in $D$. Then the descent level sets $D_{i_s \ell_s}$ for $s = 1, \ldots,k$ are pairwise disjoint by the property mentioned above. Hence
$$
\sum_{s = 1}^k d_{i_s \ell_s} = \sum_{s = 1}^k |\{j : \pi_{i_{s} j} = \ell_s > \pi_{i_s + 1, j} \}| \le a
$$
as needed.

Let us now describe the inverse map $\Phi^{-1}$. Given an $\mathbb{N}$-matrix $D \in \mathrm{BM}(a,b,c)$, we show how to uniquely reconstruct a plane partition $\pi \in \mathrm{PP}(a,b,c)$ such that $\Phi(\pi) = D$. We will build $\pi$ sequentially by scanning the columns of $D$ starting from the last one. 

Let us show how to {\it add} an element $\ell$ in $i$-th row of some plane partition $\pi'$. To do so, find the first available column of $\pi'$ whose length is less than $i$, then add the elements $\ell$ in that column so that its length becomes $i$. For example, suppose $\pi'$ already had the following shape and we want to add $\ell$ in the 3rd row:

{\scriptsize
\begin{center}
\begin{ytableau}
 ~ & {~} & {~} \\
 {~} & ~  \\  
 {~} & {~} \\
 ~ & ~
\end{ytableau}
add $\ell$ in row $3$ $\mapsto$
\begin{ytableau}
 ~ & {~} & {~} \\
 {~} & ~ & {\ell} \\  
 {~} & {~} & {\ell} \\
 ~ & ~
\end{ytableau}
\end{center}
}

Let us initially set $\pi = \varnothing$. To {\it add} the $\ell$-th column $(d_{1 \ell }, \ldots, d_{b \ell })^T$ of $D$ for $\ell = c,c-1, \ldots, 1$ we do the following. For each $i = b, b-1, \ldots, 1$ add $\ell$ in $i$-th row of $\pi$ exactly $d_{i \ell}$ times. 

Let us check that after this procedure we have $\pi \in \mathrm{PP}(a,b,c)$. By construction, it is clear that $\pi$ has at most $b$ rows and the largest entry at most $c$. We know that in $D$ the maximal weight of a monotone path from $(1,1)$ to $(b,c)$ is at most $a$. 
Suppose to the contrary that the length of the first row of $\pi$ is larger than $a$. Consider the elements of the first row of $\pi$, say $(\ell_1 \ge \cdots \ge \ell_{k})$ for $k > a$ and suppose they were added in rows $i_1, \ldots,  i_k$ during the procedure. 
Then we must have $i_1 \ge \cdots \ge i_k$ and hence there is a monotone path in $D$ that passes through all points $(i_k, \ell_k) \to \cdots \to (i_{1}, \ell_{1})$ (here if we have the same point $(i_{j}, \ell_j)$ repeated several times we may use it just once as its multiplicity is recorded in $d_{i_{j} \ell_{j}}$). The weight of any such monotone path is at least $\sum_{j} d_{i_{j} \ell_{j}} \ge k > a$ which is a contradiction. Finally, it is easy to see that 
$\pi$ has the desired property $\Phi(\pi) = D$.
\end{proof}

\begin{example}\label{ex3}
Let us show how $\Phi^{-1}$ applies to the matrix in Example~\ref{ex1}. We have: 

\begin{center}
$\Phi^{-1} : D=
\left(
\begin{matrix}
0 & 1 & 0 & 1\\
1 & 0 & 0 & 1\\
0 & 2 & 0 & 0\\
\end{matrix}
\right) \longmapsto 
$
\begin{ytableau}
 4 & {4} & {2} \\
 {4} & 2 & {1} \\  
 {2} & {2} 
\end{ytableau}
\end{center}
which is obtained as follows: 

add column 4: add $4$ in row $2$:  
{\scriptsize \begin{ytableau}
 4  \\
 {4}   
\end{ytableau}
}
\quad add $4$ in row $1$:  
{\scriptsize
\begin{ytableau}
 4  & 4 \\
 {4}   
\end{ytableau}
}

add column 3: no addition

add column 2: add $2$ in row $3$ two times:\quad  
{\scriptsize 
\begin{ytableau}
 4  & 4\\
 {4}  & 2\\ 
 2 & 2
\end{ytableau}
}
\quad add $2$ in row $1$: 
{\scriptsize
\begin{ytableau}
 4  & 4 & 2\\
 {4}  & 2\\ 
 2 & 2
\end{ytableau}
}

add column 1: add $1$ in row $2$:
{\scriptsize
\begin{ytableau}
 4  & 4 & 2\\
 {4}  & 2 & 1\\ 
 2 & 2
\end{ytableau}
}
\end{example}



\begin{corollary}
$|\mathrm{PP}(a,b,c)| = |\mathrm{BM}(a,b,c)|$.
\end{corollary}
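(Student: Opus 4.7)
The plan is essentially trivial: this corollary is an immediate consequence of Theorem~\ref{bij} just proved. Since $\Phi$ has been shown to be a well-defined bijection from $\mathrm{PP}(a,b,c)$ onto $\mathrm{BM}(a,b,c)$, taking cardinalities on both sides yields the claim. I would write one sentence invoking the theorem and asserting that a bijection between finite sets preserves cardinality.

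If one wanted to be more careful, the only thing worth double-checking is that both sets are genuinely \emph{finite}, since cardinality equality via a bijection is otherwise vacuous. Finiteness of $\mathrm{PP}(a,b,c)$ is clear from the definition: entries are bounded by $c$, the shape fits in an $a \times b$ rectangle, so there are at most $(c+1)^{ab}$ boxed plane partitions. Finiteness of $\mathrm{BM}(a,b,c)$ follows because the entries of $D$ are bounded (each $d_{ij} \le G(b,c) \le a$) and the matrix has only $bc$ positions, so $|\mathrm{BM}(a,b,c)| \le (a+1)^{bc}$. These bounds are crude but suffice for the bijection argument.

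There is no real obstacle here; the substantive work is already contained in Theorem~\ref{bij}, whose forward direction verifies that $\Phi$ lands in $\mathrm{BM}(a,b,c)$ and whose constructive inverse $\Phi^{-1}$ establishes surjectivity (with injectivity implicit in the column-by-column reconstruction). The corollary is simply the enumerative shadow of that bijection, and I would state it in a single line: ``Immediate from Theorem~\ref{bij}.''
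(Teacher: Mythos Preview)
Your proposal is correct and matches the paper's approach exactly: the paper states this corollary with no proof, treating it as immediate from Theorem~\ref{bij}. Your additional remarks on finiteness are fine but unnecessary for the paper's purposes.
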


Note that by letting $a,b,c \to \infty$ one can view $\Phi$ as a bijection between plane partitions and $\mathbb{N}$-matrices.

We discuss few more properties of the bijection $\Phi$ that will be needed later. 

\begin{lemma}
\label{phip}
Let $\pi$ be a plane partition and $\Phi (\pi)= D  =(d_{i \ell})$.

(i) Let $c_{\ell}(\pi)$ be the number of columns of $\pi$ containing entry $\ell$,
and $c_{\ell}(D) = \sum_{i} d_{i \ell}$ be column sums of $D$. Then we have $c_{\ell}(\pi) = c_{\ell}(D)$ for all $\ell \ge 1$.
  
(ii) Let $\lambda = \mathrm{sh}(\pi)$ be the shape of $\pi$. We have for all $k \ge 1$
\begin{align}\label{lamm}
\lambda_k  = \max_{\Pi : (k, 1) \to (b, c)} \sum_{(i,\ell) \in \Pi} d_{i \ell},
\end{align}
where the maximum is taken over monotone paths 
$\Pi$ from $(k, 1)$ to $(b,c)$, if $D$ has $b$ rows and $c$ columns. 
\end{lemma}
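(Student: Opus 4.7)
For part (i), the plan is to set up a bijection between the set of columns of $\pi$ containing the value $\ell$ and the disjoint union $\bigsqcup_{i \ge 1} D_{i,\ell}$. The key point is that if $\ell$ appears in column $j$ of $\pi$, then there is a unique largest row $i_j$ with $\pi_{i_j,j} = \ell$, and by maximality $\pi_{i_j+1,j} < \ell$, so that $j \in D_{i_j,\ell}$. Conversely, any $j \in D_{i,\ell}$ records an occurrence $\pi_{i,j} = \ell$ in column $j$, with row necessarily $i = i_j$. Summing cardinalities over $i$ gives $c_\ell(\pi) = \sum_i d_{i,\ell} = c_\ell(D)$.

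For part (ii), the plan is to reduce to the $k = 1$ case, which is essentially already contained in Theorem~\ref{bij}. For general $k$, I would define the shifted plane partition $\pi^{(k)}$ by $\pi^{(k)}_{ij} := \pi_{i+k-1,j}$; this is itself a plane partition, with shape $(\lambda_k, \lambda_{k+1}, \ldots, \lambda_b)$, so its first row has length exactly $\lambda_k$. Directly from the local definition \eqref{ldes} of the descent sets, which only compares adjacent rows, one computes $\Phi(\pi^{(k)})_{i,\ell} = d_{i+k-1,\ell}$; that is, $D^{(k)} := \Phi(\pi^{(k)})$ is nothing but $D$ with its first $k-1$ rows removed. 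Under the index shift $i \mapsto i+k-1$, monotone paths in $D^{(k)}$ from $(1,1)$ to $(b-k+1,c)$ correspond weight-preservingly to monotone paths in $D$ from $(k,1)$ to $(b,c)$.

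Now I would apply the $k=1$ statement to $\pi^{(k)}$. By Theorem~\ref{bij}, the smallest $a$ for which $\pi^{(k)} \in \mathrm{PP}(a, b-k+1, c)$ equals the smallest $a$ for which $D^{(k)} \in \mathrm{BM}(a, b-k+1, c)$; the former is $\lambda_k$, and the latter is the maximum weight of a monotone path in $D^{(k)}$ from $(1,1)$ to $(b-k+1,c)$. Combined with the path correspondence above, this gives precisely \eqref{lamm}. The only non-trivial ingredient is Theorem~\ref{bij} itself, so the hard part of the proof has already been done; the compatibility of $\Phi$ with row-truncation is automatic from the local nature of \eqref{ldes}, and I do not anticipate any further obstacle.
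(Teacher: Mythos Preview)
Your argument for part (i) is essentially identical to the paper's: both identify, for each column $j$ containing $\ell$, the unique bottom-most occurrence as the witness that $j$ lies in exactly one $D_{i,\ell}$, so $\sum_i d_{i,\ell}$ counts columns containing $\ell$.

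For part (ii), however, you take a genuinely different route. The paper proves the two inequalities directly: for any monotone path $\Pi$ from $(k,1)$ to $(b,c)$ it uses pairwise disjointness of the sets $D_{i\ell}$ along $\Pi$, together with $i\ge k \Rightarrow D_{i\ell}\subset[1,\lambda_k]$, to get the upper bound; and for the lower bound it reads off from the $k$-th row of $\pi$ an explicit path of weight $\ge\lambda_k$, exactly as in the proof of Theorem~\ref{bij}. Your approach instead observes that $\Phi$ commutes with deleting the first $k-1$ rows of $\pi$ (since \eqref{ldes} only compares adjacent rows), reducing to the $k=1$ case, which is then extracted from Theorem~\ref{bij} by noting that the bijection $\mathrm{PP}(a,b',c)\leftrightarrow\mathrm{BM}(a,b',c)$ for all $a$ forces the minimal admissible $a$'s to coincide. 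This is correct and arguably cleaner: it avoids repeating the path-construction argument already carried out in Theorem~\ref{bij}. The paper's direct approach has the minor advantage of being self-contained and of making the role of the disjointness property of the $D_{i\ell}$ explicit once more, but the content is the same.
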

\begin{proof}
The part (i) is clear by construction. Indeed, by definition of $\Phi$ we have
$$
c_{\ell}(D) = \sum_{i} d_{i \ell} = \sum_{i} |\{ j : \pi_{i j} = \ell > \pi_{i + 1, j} \}| = c_{\ell}(\pi).
$$

(ii) Take any monotone path $\Pi$ from $(k,1)$ to $(b,c)$. Then the descent level sets $D_{i \ell}$ for $(i,\ell) \in \Pi$ are pairwisely disjoint. Using this and  since $i \ge k$ for all $(i,\ell) \in \Pi$, we obtain
\begin{equation}\label{lal}
\sum_{(i,\ell) \in \Pi} d_{i \ell} = \sum_{(i,\ell)} |\{ j : \pi_{i j} = \ell > \pi_{i + 1, j} \}| \le \lambda_k.
\end{equation}
On the other hand, suppose the $k$-th row of $\pi$ has entries $(\ell_1 \ge \cdots \ge \ell_{m})$ where $m = \lambda_k$. Assume the entries $\ell_1, \ldots, \ell_m$ end in rows $i_1 \ge \cdots \ge i_m$ of $\pi$. Then there is a monotone path $\Pi$ from $(k,1)$ to $(b,c)$ that contains all points $(i_m, \ell_m), \ldots, (i_1, \ell_1)$. The weight of any such path is at least 
$\sum_{j} d_{i_j \ell_j} \ge m = \lambda_k$. Combining this with the inequality \eqref{lal} we obtain \eqref{lamm}. 
\end{proof}

\begin{remark}
The RSK 
correspondence also gives a bijection between plane partitions and $\mathbb{N}$-matrices, see e.g. \cite[Ch.~7]{sta}. The bijection $\Phi$ has another (simpler) description and different yet related properties. We explore more properties and applications of this bijection in \cite{dy4b}. 
\end{remark}

\begin{remark}
Many RSK-type dynamics were described in \cite{bp2} and it would be interesting to see how the map $\Phi$ can be related to that classification.  
\end{remark}

\section{Dual Grothendieck polynomials} 
Recall that a {\it partition} is a  sequence $\lambda = (\lambda_1, \ldots, \lambda_{\ell})$ of nonincreasing positive integers. Here $\ell = \ell(\lambda)$ is the {\it length} of $\lambda$. The {\it Young diagram} of $\lambda$ is the set $\{(i,j)  : i \in [1, \ell], j\in[1, \lambda_i] \}$. We denote by $\lambda'$  the {\it conjugate} partition of $\lambda$, which has transposed Young diagram.

\begin{definition}[\cite{lp}]
Let $\mathrm{PP}_n(\lambda)$ be the set of plane partitions of shape $\lambda$ and largest entry at most $n$. The {\it dual symmetric Grothendieck polynomials} $\{ g_{\lambda} \}$ are defined by the following combinatorial formula
$$
g_{\lambda}(x_1, \ldots, x_n) = \sum_{T \in \mathrm{PP}_n(\lambda)} x^T,
$$
where $x^T = \prod_{i = 1}^n x_i^{c_i}$ and $c_i$ is the number of columns of $T$ containing $i$.
\end{definition}
The following properties hold: $g_{\lambda}$ is an inhomogeneous symmetric polynomial whose top degree component is the Schur polynomial, i.e. $g_{\lambda} = s_{\lambda} + \text{lower degree elements}$. 

Besides combinatorial definition we need the following determinantal formula. We denote $\mathbf{x} = (x_1, x_2, \ldots )$ and $\{e_n\}$ are the elementary symmetric functions. 
\begin{lemma}[Jacobi-Trudi identity, see \cite{dy}]
We have
\begin{equation}\label{jt}
g_{\lambda}(\mathbf{x}) = \det\left[ e_{\lambda'_i - i + j}(1^{\lambda'_i - 1}, \mathbf{x})  \right]_{1 \le i,j \le \lambda_1}.
\end{equation}
\end{lemma}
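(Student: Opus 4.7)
The plan is to prove the identity by a Lindström--Gessel--Viennot (LGV) non-intersecting lattice path argument, in analogy with the classical proof of the dual Jacobi--Trudi identity $s_\lambda = \det[e_{\lambda'_i - i + j}]$ for Schur functions, suitably modified to accommodate the repeated-entry (multiplicity) nature of plane partitions.

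First I would decompose the combinatorial sum by columns: for $T \in \mathrm{PP}_n(\lambda)$, write $x^T = \prod_{i=1}^{\lambda_1} w_i(T)$, where $w_i(T) = \prod_{v \in \mathrm{dist}(\mathrm{col}_i(T))} x_v$ is the squarefree product over distinct entries of column $i$. A direct count confirms that the generating function for a single column of length $\lambda'_i$ equals $e_{\lambda'_i}(1^{\lambda'_i - 1}, \mathbf{x})$: partition by the support $S \subseteq [n]$ of the column, obtaining $\sum_S x^S \binom{\lambda'_i - 1}{|S| - 1}$, and recognize this as the expansion $e_{\lambda'_i}(1^{\lambda'_i - 1}, \mathbf{x}) = \sum_k \binom{\lambda'_i - 1}{k - 1} e_k(\mathbf{x})$ obtained by absorbing the ones.

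Next I would encode each column as a path in an augmented lattice having $\lambda'_i - 1$ ``neutral'' steps (for the padding ones) together with the $n$ ``variable'' steps (for $x_1, \ldots, x_n$), and place sources $A_i$ and sinks $B_j$ so that a path from $A_i$ to $B_j$ has generating function $e_{\lambda'_i - i + j}(1^{\lambda'_i - 1}, \mathbf{x})$, while non-intersection of a tuple $(P_1, \ldots, P_{\lambda_1})$ with $P_i : A_i \to B_i$ encodes precisely the row-weakly-decreasing condition $\pi_{l, i} \ge \pi_{l, i+1}$ of a plane partition. The LGV lemma then yields
\[
\det\bigl[e_{\lambda'_i - i + j}(1^{\lambda'_i - 1}, \mathbf{x})\bigr]_{1 \le i, j \le \lambda_1} = \sum_{(P_1, \ldots, P_{\lambda_1}) \text{ non-intersecting}} \prod_{i} w(P_i) = g_\lambda(\mathbf{x}).
\]

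The main obstacle is the coupling of two distinct ``lengths'' in each matrix entry: the row-dependent number of padding ones $\lambda'_i - 1$ and the column-dependent effective path length $\lambda'_i - i + j$. The naive encoding of a column as a lattice path with $\lambda'_i$ north steps correctly produces $e_\ell(1^{\ell - 1}, \mathbf{x})$ on the diagonal, but for off-diagonal entries the additional $j - i$ steps must behave as ``variable'' rather than ``neutral,'' so that the count of ones stays frozen at $\lambda'_i - 1$ while $j$ varies. Arranging this so that non-intersection still produces the plane-partition row condition is the delicate point. If this path setup proves too rigid, I would fall back to a sign-reversing involution on the expanded determinant: expand each entry via $e_k(1^{\lambda'_i - 1}, \mathbf{x}) = \sum_l \binom{\lambda'_i - 1}{k - l} e_l(\mathbf{x})$, interpret each $e_l(\mathbf{x})$ as the generating function for a strictly decreasing ``support'' sequence decorated with a multiplicity vector of total $\lambda'_i$, and pair up by a column-swapping involution those signed tuples whose supports fail to assemble into a valid plane partition, leaving only the valid contributions, which reproduce $g_\lambda(\mathbf{x})$.
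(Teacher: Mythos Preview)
The paper does not supply its own proof of this lemma: it is stated with the attribution ``see \cite{dy}'' and the surrounding remark notes that the formula was first obtained in \cite{sz} with a proof given in \cite{dy}. So there is no in-paper argument to compare against.

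That said, your plan is aligned with the proof in \cite{dy}, which is indeed an LGV/involution argument adapted from the classical dual Jacobi--Trudi proof. Your diagonal check is correct: a single column of height $\ell$ with weakly decreasing entries and weight $\prod_{v \in \text{support}} x_v$ has generating function $\sum_{k} \binom{\ell-1}{k-1} e_k(\mathbf{x}) = e_{\ell}(1^{\ell-1},\mathbf{x})$. You have also correctly isolated the genuine difficulty: the number of padding ones is $\lambda'_i - 1$, a function of the \emph{row} index only, while the total degree $\lambda'_i - i + j$ varies with $j$. In a naive path model this would force the extra $j - i$ north steps to be ``variable'' steps, and then non-intersection no longer cleanly encodes the weakly-decreasing row condition.

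Where your proposal stops short is in actually resolving this. The fix used in \cite{dy} is to build the lattice with a block of dummy (weight-$1$) levels placed so that the source $A_i$ sits at a height giving path $i$ access to exactly $\lambda'_i - 1$ of them; because $\lambda'_1 \ge \lambda'_2 \ge \cdots$, these blocks nest, and the sources can be positioned so that non-intersection simultaneously forces each path to use only its own dummies and forces the column supports to interlace as a plane partition requires. Your fallback sign-reversing involution is also viable and is essentially equivalent, but as written it is a sketch: you would need to specify the involution (which pair of adjacent columns to swap, and how to redistribute the multiplicity data between them so that the map is weight-preserving and sign-reversing). Either route works, but neither is complete in your current write-up; the substance of the proof lies precisely in the ``delicate point'' you flag but do not carry out.
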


\begin{remark}
Jacobi-Trudi formulas for $g_{\lambda}$ were first obtained in \cite{sz}. A proof was given in \cite{dy}. More properties of these polynomials can be found in \cite{lp, dy, dy2}.
\end{remark}

The following properties will be important for  our results.
\begin{lemma}[Coincidence lemma]\label{l1}
Let $\rho := (a^{b})$. We have 
$$
g_{\rho}(\mathbf{x}) = s_{\rho}(1^{b - 1}, \mathbf{x}).
$$
\end{lemma}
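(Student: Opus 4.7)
The strategy is to match the two sides directly via the Jacobi–Trudi determinant \eqref{jt}. The partition $\rho = (a^b)$ is a rectangle, so its conjugate is $\rho' = (b^a)$; in particular, $\rho'_i = b$ for every $i \in [1,a]$, and $\rho_1 = a$. Plugging this into \eqref{jt} gives
\begin{equation*}
g_{\rho}(\mathbf{x}) \;=\; \det\bigl[\,e_{b - i + j}(1^{b-1}, \mathbf{x})\,\bigr]_{1 \le i,j \le a}.
\end{equation*}
The crucial point is that because $\rho'_i$ does not depend on $i$, the argument $1^{\rho'_i - 1} = 1^{b-1}$ inside each $e$ is the \emph{same} alphabet across every entry of the matrix; so no row-dependent variable set appears. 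This is the special feature of the rectangle that makes the coincidence possible.

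Next I would invoke the dual Jacobi–Trudi identity for Schur polynomials,
\begin{equation*}
s_{\lambda}(\mathbf{y}) \;=\; \det\bigl[\,e_{\lambda'_i - i + j}(\mathbf{y})\,\bigr]_{1 \le i,j \le \lambda_1},
\end{equation*}
applied with $\lambda = \rho$ and $\mathbf{y} = (1^{b-1}, \mathbf{x})$. This yields precisely
\begin{equation*}
s_{\rho}(1^{b-1}, \mathbf{x}) \;=\; \det\bigl[\,e_{b - i + j}(1^{b-1}, \mathbf{x})\,\bigr]_{1 \le i,j \le a},
\end{equation*}
which is the same determinant as for $g_{\rho}(\mathbf{x})$. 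Comparing the two expressions gives the claimed equality.

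So the proof is essentially a one-line comparison of determinants, with no genuine obstacle beyond recognizing that the $g$-polynomial Jacobi–Trudi entries collapse to the ordinary Schur Jacobi–Trudi entries exactly when $\lambda$ is rectangular (because the column-dependent specialization $1^{\lambda'_i - 1}$ becomes row-independent). For non-rectangular shapes, different rows of the determinant would carry different ground alphabets $1^{\lambda'_i - 1}$, and no such clean identification with a single Schur polynomial would be available — this is what makes rectangles the natural setting for the lemma.
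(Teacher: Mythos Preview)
Your proof is correct and follows exactly the same approach as the paper: both specialize the Jacobi--Trudi formula \eqref{jt} to $\rho = (a^b)$, observe that $\rho'_i = b$ makes the alphabet $1^{\rho'_i - 1} = 1^{b-1}$ uniform across rows, and then identify the resulting determinant with the dual Jacobi--Trudi expression for $s_\rho(1^{b-1},\mathbf{x})$. Your added remark on why the rectangle hypothesis is essential is a nice clarification not made explicit in the paper.
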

\begin{proof}
We show that the Jacobi-Trudi formulas for both polynomials coincide under these conditions. Note firstly that by the classical Jacobi-Trudi formula we have
\begin{align}\label{sr}
s_{\rho}(1^{b - 1}, \mathbf{x}) = \det\left[ e_{b - i + j}(1^{b - 1}, \mathbf{x})\right]_{1 \le i, j \le a}
\end{align}
For the polynomials $g$, the Jacobi-Trudi identity \eqref{jt} 
for $\lambda = \rho$ gives \eqref{sr}.
\end{proof}

\begin{remark}
While this coincidence formula is evident from determinantal identities, we should point out that it is {\it not} at all obvious combinatorially. Here one would use an argument as in \cite{dy} on combinatorial proof of the identity \eqref{jt}. 
\end{remark}

\begin{corollary}
We have the following formula
\begin{align}
\sum_{\lambda \subset (a^{b})} g_{\lambda}(\mathbf{x}) &= s_{(a^b)}(1^{b}, \mathbf{x})\label{gx} 
\end{align}
\end{corollary}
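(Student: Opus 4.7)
The plan is to apply the Coincidence Lemma with an augmented variable list and then interpret the resulting polynomial combinatorially via a bijection on plane partitions.

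First, I would substitute $(1, x_1, x_2, \ldots)$ for $\mathbf{x}$ in Lemma~\ref{l1}, which yields
$$
g_{(a^b)}(1, \mathbf{x}) = s_{(a^b)}(1^{b-1}, 1, \mathbf{x}) = s_{(a^b)}(1^b, \mathbf{x}).
$$
It therefore suffices to prove the combinatorial identity
$$
g_{(a^b)}(1, \mathbf{x}) = \sum_{\lambda \subseteq (a^b)} g_{\lambda}(\mathbf{x}).
$$

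For this, I would expand the left-hand side via the combinatorial definition of $g_{\lambda}$. By symmetry of $g$, the specialization $g_{(a^b)}(1, x_1, x_2, \ldots)$ equals $\sum_T \prod_{i \ge 1} x_i^{c_{i+1}(T)}$, summed over plane partitions $T$ of shape $(a^b)$ with positive entries, where the entries equal to $1$ contribute a factor of $1$. I would then set up the bijection $T \leftrightarrow (\lambda, T'')$ where $\lambda \subseteq (a^b)$ is the Young subdiagram consisting of the cells with $T_{ij} \ge 2$ (which is a genuine Young diagram by the weak-decrease conditions on $T$) and $T''_{ij} := T_{ij} - 1$ for $(i,j) \in \lambda$; the inverse fills the complement $(a^b) \setminus \lambda$ with $1$'s and adds $1$ to the entries of $T''$. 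Under this correspondence $c_{i+1}(T) = c_i(T'')$ for every $i \ge 1$, because the cells of $(a^b)$ outside $\lambda$ carry only the value $1$ in $T$ and so contribute to no $x_i$ with $i \ge 1$. Summing over $\lambda$ gives $\sum_{\lambda \subseteq (a^b)} g_{\lambda}(\mathbf{x})$, completing the combinatorial step.

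The only detail requiring verification is the weight-preservation $c_{i+1}(T) = c_i(T'')$ in the bijection, which is immediate from the definitions; the substantive content lies entirely in Lemma~\ref{l1}. Combining the two displays above then yields the corollary.
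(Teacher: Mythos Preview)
Your proof is correct and follows essentially the same approach as the paper: combine the Coincidence Lemma with the branching identity $g_{(a^b)}(1,\mathbf{x}) = \sum_{\lambda \subseteq (a^b)} g_{\lambda}(\mathbf{x})$. The only difference is cosmetic: the paper states the branching formula as an immediate consequence of the combinatorial definition of $g$, whereas you spell out the underlying bijection $T \leftrightarrow (\lambda, T'')$ explicitly; the weight-preservation check $c_{i+1}(T)=c_i(T'')$ is indeed immediate as you note.
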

\begin{proof}
Let $\rho = (a^{b})$. By combinatorial definition of $g$, we have the branching formula
$$
g_{\rho}(1, \mathbf{x}) = \sum_{\lambda \subset \rho} g_{\lambda}(\mathbf{x}).
$$
On the other hand, $g_{\rho}(\mathbf{x}, 1) = s_{\rho}(1^{b}, \mathbf{x})$ by Lemma~\ref{l1}. 
\end{proof}

\begin{lemma}
The following identity holds
\begin{align}\label{gl}
\sum_{\ell(\lambda) \le b} g_{\lambda}(\mathbf{x}) = \prod_{n}^{} {(1 - x_n)^{-b}}
\end{align}
\end{lemma}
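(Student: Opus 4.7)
My plan is to start from the combinatorial definition of $g_{\lambda}$, transport the resulting sum to $\mathbb{N}$-matrices via the bijection $\Phi$ of Theorem~\ref{bij}, and reduce the identity to a column-by-column factorization into geometric series.

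First, grouping plane partitions by their shape $\lambda$ and using the combinatorial definition of $g_{\lambda}$, I get
\[
\sum_{\ell(\lambda) \le b} g_{\lambda}(\mathbf{x}) \;=\; \sum_{\pi} \prod_{i \ge 1} x_i^{c_i(\pi)},
\]
where $\pi$ ranges over all plane partitions with at most $b$ rows (the condition $\ell(\lambda) \le b$ being equivalent to $\pi$ having at most $b$ nonzero rows), and $c_i(\pi)$ is as in Lemma~\ref{phip}(i).

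Next, I would invoke Theorem~\ref{bij} in the unbounded form noted right after its proof: $\Phi$ gives a bijection between all plane partitions and all $\mathbb{N}$-matrices. The key observation is that this bijection restricts to one between plane partitions with at most $b$ rows and $\mathbb{N}$-matrices with at most $b$ rows. Indeed, the definition \eqref{ldes} forces $D_{i\ell} = \emptyset$ whenever row $i$ of $\pi$ is identically zero, and conversely the inverse procedure in the proof of Theorem~\ref{bij} inserts new entries only into those rows of $\pi$ that are recorded by $D$. By Lemma~\ref{phip}(i) we have $c_i(\pi) = c_i(D) = \sum_{j=1}^{b} d_{j i}$, so the sum rewrites as
\[
\sum_{D} \prod_{i \ge 1} x_i^{c_i(D)},
\]
where $D = (d_{j i})$ ranges over $\mathbb{N}$-matrices with $j \in \{1,\ldots,b\}$ and $i \ge 1$.

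Finally, since $c_i(D)$ depends only on the $i$-th column of $D$ and distinct columns of an $\mathbb{N}$-matrix can be chosen independently, the generating function factors columnwise into geometric series:
\[
\sum_{D} \prod_{i \ge 1} x_i^{c_i(D)} \;=\; \prod_{i \ge 1} \Big( \sum_{d_1, \ldots, d_b \ge 0} x_i^{d_1 + \cdots + d_b} \Big) \;=\; \prod_{i \ge 1} (1 - x_i)^{-b},
\]
which is the right-hand side of \eqref{gl}. The only point requiring any care is verifying that $\Phi$ restricts to the right subsets; this is immediate from the two constructions in Theorem~\ref{bij}, and no serious obstacle is anticipated — once the bijective viewpoint is adopted the identity is almost tautological.
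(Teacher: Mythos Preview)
Your argument is correct. The bijection $\Phi$ does restrict to one between plane partitions with at most $b$ rows and $\mathbb{N}$-matrices with at most $b$ rows (your justification via the descent level sets and the inverse construction is fine), Lemma~\ref{phip}(i) transfers the weight correctly, and with no last-passage constraint the columns of $D$ genuinely decouple into independent $b$-tuples of nonnegative integers, giving the geometric-series factorization.

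The paper, however, proves the lemma by a different route: it specializes the Cauchy identity for the pair $(G_\lambda, g_\lambda)$ of symmetric Grothendieck polynomials at $1^b$ on the $G$-side, using the known evaluation $G_\lambda(1^b)=1$ when $\ell(\lambda)\le b$ and $G_\lambda(1^b)=0$ otherwise. Your approach is the bijective one that the paper explicitly mentions as an alternative (``one can also derive it from the bijection $\Phi$'') and defers to \cite{dy4b}. The Cauchy-identity proof is shorter once those tools are in hand and sits naturally in the symmetric-function framework; your proof is more self-contained within this paper, avoids invoking $G_\lambda$ altogether, and makes the identity combinatorially transparent --- which is in line with the paper's broader theme that $\Phi$ underlies much of the structure here.
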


\begin{proof}
This identity follows from the Cauchy identity for the pair $G_{\lambda}, g_{\lambda}$ of Grothendieck polynomials and the following property of the symmetric Grothendieck polynomials $G_{\lambda}(1^{b}) = 1$ if $\ell(\lambda) \le b$ and $0$ otherwise, see \cite{dy3}. In fact, one can also derive it from the bijection $\Phi$, we address this in \cite{dy4b}.
\end{proof}

\begin{remark}
The formulas \eqref{gx}, \eqref{gl} can perhaps be compared to the following identities for Schur functions, see \cite{mac}
\begin{align*}
\sum_{\lambda \subset (a^{b})} s_{\lambda}(x_1, \ldots, x_{b}) &= \frac{\det\left[ x_j^{a + 2b - i} - x_j^{i - 1} \right]_{1 \le i, j \le b}}{\det\left[ x_j^{2b - i} - x_j^{i - 1} \right]_{1 \le i, j \le b}},\\
\sum_{\ell(\lambda) \le b} s_{\lambda}(x_1, \ldots, x_b) &= \prod_{1 \le n \le b}\frac{1}{1 - x_n} \prod_{1 \le i < j \le b} \frac{1}{1 - x_i x_j}.
\end{align*}
\end{remark}

\section{Normalized Schur polynomials}\label{normschur}
The {\it normalized Schur polynomial} $S_{\lambda}(\mathbf{x}; N)$ 
is defined for $N \ge \max(k, \ell(\lambda))$ as follows 
\begin{align}
S_{\lambda}(x_1, \ldots, x_k; N) := \frac{s_{\lambda}(x_1, \ldots, x_k, 1^{N-k})}{s_{\lambda}(1^N)}
\end{align}
We are interested in a special case when $\lambda= (a^b)$ has rectangular shape. 
We present some known asymptotics for normalized Schur polynomials.
\begin{lemma}[see \cite{bp}]\label{bpt}
Let $N = b + c$,  $\rho = (a^b)$ and $x_i \in \mathbb{C}$. We have:

For $a,b,c \to \infty$ with $ab/c \to t > 0$
\begin{align*} 
S_{\rho}(x_1, \ldots, x_k; N) \to e^{-kt} \prod_{i = 1}^k e^{t x_i}
\end{align*}

For $b$ fixed and $a, c \to \infty$ with $a/(a + c) \to q \in (0,1)$ 
\begin{align*} 
S_{\rho}(x_1, \ldots, x_k; N) \to (1 - q)^{b k} \prod_{i = 1}^k {(1 - q x_i)^{-b}}
\end{align*}
\end{lemma}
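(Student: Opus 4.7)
The plan is to reduce, via the Jacobi bialternant formula, to a $k\times k$ determinant of single-variable functions, and then to carry out asymptotic analysis entry-by-entry, exactly as in the standard derivation of normalized-character asymptotics for rectangular shapes.

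First, starting from
\[
s_{(a^b)}(y_1,\ldots,y_N)=\frac{\det_{1\le i,j\le N}\bigl[y_j^{\lambda_i+N-i}\bigr]}{\det_{1\le i,j\le N}\bigl[y_j^{N-i}\bigr]}
\]
with $\lambda=(a^b)$ and $(y_1,\ldots,y_N)=(x_1,\ldots,x_k,1,\ldots,1)$, I would apply L'Hôpital to the $N-k$ equal arguments in both the numerator and the denominator. Dividing by $s_{(a^b)}(1^N)$, which by the Weyl dimension formula is an explicit product of factorials, and simplifying gives an identity of the shape
\[
S_{(a^b)}(x_1,\ldots,x_k;N)=\frac{C_{N,a,b,k}}{\prod_{1\le i<j\le k}(x_j-x_i)}\;\det_{1\le i,j\le k}\bigl[F_j(x_i)\bigr],
\]
where $C_{N,a,b,k}$ is a ratio of factorials and each $F_j(x)$ admits a contour-integral representation
\[
F_j(x)=\frac{1}{2\pi\mathrm{i}}\oint_{|z|=R}\frac{(z+x-1)^{\alpha_j}}{z^{\beta_j+1}}\,dz
\]
with integer exponents $\alpha_j,\beta_j$ depending polynomially on $a,b,N,j$.

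Second, I would analyze the asymptotics of $F_j(x)$ in each regime, using the fact that $F_j(x)\sim F_j(1)\,\phi(x)\,\psi_j(x)$, where $\phi(x)$ is the common limiting factor and $\psi_j$ is a polynomial of degree less than $j$ arising from the subleading terms. In regime (i), where $ab/c\to t$, the saddle of the integrand lies near $z=1$, and a Taylor expansion of the action $a\log(z+x-1)-(\text{total degree})\log z$ together with Stirling's formula yields the exponential factor $\phi(x)=e^{t(x-1)}$. In regime (ii), where $b$ is fixed, one can bypass the saddle-point analysis entirely: expand $(z+x-1)^a$ by the binomial theorem and extract the residue at $z=0$, so that the identity $\sum_{m\ge 0}\binom{b+m-1}{m}y^m=(1-y)^{-b}$ produces $\phi(x)=(1-q)^{b}(1-qx)^{-b}$.

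Finally, after assembling the determinant, the polynomial factors $\psi_j$ combine with the Vandermonde denominator through the identity $\det[\psi_j(x_i)]/\prod_{i<j}(x_j-x_i)=(\text{constant})$, and the constants conspire with $C_{N,a,b,k}$ to cancel, leaving only $\prod_{i=1}^{k}\phi(x_i)$ in the limit, matching both claimed formulas. The main technical obstacle is the saddle-point work in regime (i): one needs convergence of $F_j(x)/F_j(1)$ to $\phi(x)\psi_j(x)$ uniformly in $x$ on compact sets and uniformly in the row index $j$, so that the limit survives the $k$-fold determinantal combination; regime (ii) is essentially elementary once the contour representation is in hand.
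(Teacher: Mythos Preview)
The paper does not supply its own proof of this lemma: it is quoted as a known result from \cite{bp} (and the Gaussian companion, Lemma~\ref{gp}, is likewise quoted from \cite{gp}). So there is no in-paper argument to compare against; the relevant comparison is with the Borodin--Petrov derivation.

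Your outline is essentially that derivation. The reduction of $S_{(a^b)}(x_1,\ldots,x_k;N)$ via the bialternant and a L'H\^opital/confluent-Vandermonde step to a $k\times k$ determinant with single-variable entries admitting contour-integral representations is exactly the mechanism used in \cite{bp} (and, in a more general form, in \cite{gp}). The entry-wise asymptotics followed by the observation that the subleading polynomial factors $\psi_j$ reproduce the Vandermonde, so that only $\prod_i \phi(x_i)$ survives, is also the standard endgame. One small correction of emphasis: in regime~(ii) with $b$ fixed, the cleanest route in \cite{bp} is not a residue extraction from the contour integral but rather the explicit branching of $s_{(a^b)}$ as a sum over interlacing rectangles, which for fixed $b$ is a finite sum of $h$-polynomials and converges termwise; your binomial/residue sketch would also work but is a detour. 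In regime~(i) your identification of the uniform control on $F_j(x)/F_j(1)$ as the main technical point is accurate, and it is precisely what the steepest-descent estimates in \cite{bp,gp} are set up to provide. As written your proposal is a correct plan rather than a proof; to complete it you would need to actually carry out the saddle analysis (locate the critical point, verify the contour deformation, and bound the error uniformly in $j\le k$ and in $x$ on compacta), or simply cite \cite{bp} as the paper does.
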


Consider now the following {\it Gaussian} asymptotic regime. Let $u > 0, q \in (0,1)$ be real parameters. Suppose $a = \lfloor uN\rfloor,$ $b = \lfloor qN \rfloor $, $b + c = N$ and let $N \to \infty$. 
Then the partitions $\rho = (a^b)$ (when each row is rescaled by $N$) have the following simple {limit shape} $f : [0,1] \to \mathbb{R}$ given by 
$$
f(x) = 
	\begin{cases}
		u, & \text{ if } x \le q,\\
		0, & \text{ if } x > q.
	\end{cases}
$$ 
Define two more parameters
\begin{align}\label{mv}
m = \int_{0}^{1} f(x) dx = uq, \quad  v = \int_{0}^{1} f(x)^2 dx - m^2 
+ \int_{0}^{1} f(x)(1 - 2x) dx = u(u+1)q(1 - q).
\end{align}
\begin{lemma}[\cite{gp}]\label{gp}
Let $a,b,c$ be in the Gaussian regime as above. 
As $N \to \infty$, we have  
\begin{align*}
S_{\rho}(e^{x_1/\sqrt{v N}}, \ldots, e^{x_k/\sqrt{v N}}; N) \cdot \prod_{i = 1}^{k} e^{- x_i {m\sqrt{N/v}}} \to \prod_{i = 1}^{k} e^{x_i^2/2}
\end{align*}
uniformly on compact subsets of $(\mathbb{R} \setminus \{0\})^k$.
\end{lemma}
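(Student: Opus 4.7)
My plan is to follow the standard strategy for normalized Schur polynomial asymptotics: saddle-point analysis of a contour-integral representation for the single-variable case, together with a reduction of the multivariable case to products of single-variable asymptotics via the Weyl character formula. This is essentially the approach of \cite{gp}. First I would start from the single-variable representation
\begin{equation*}
S_\rho(x;N) = \frac{(N-1)!}{(x-1)^{N-1}} \cdot \frac{1}{2\pi i} \oint_C \frac{x^w}{\prod_{j=1}^N (w-\ell_j)}\, dw,
\end{equation*}
where $\ell_j := \rho_j + N - j$ and $C$ is a contour enclosing all $\ell_j$; this formula is a direct consequence of the Weyl bialternant formula after coalescing the last $N-1$ variables and can be checked by residue calculus. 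For $\rho=(a^b)$ in the Gaussian regime, the rescaled multiset $\{\ell_j/N\}$ is a union of two arithmetic progressions whose empirical measure converges weakly to the uniform measure $\mu$ on $[0,1-q]\cup[u+1-q,\,u+1]$.

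Next I would substitute $x=\exp(y/\sqrt{vN})$ and $w=N\xi$, putting the integrand in the form $\exp(N\Phi_N(\xi;y))$ for an explicit action $\Phi_N$. Combined with Stirling's formula for $(N-1)!$ and the factor $(x-1)^{1-N}$, standard steepest descent should produce a saddle $\xi^*(y)=\xi^*_0+O(y/\sqrt{N})$ lying in the gap $(1-q,\,u+1-q)$, with $\xi^*_0$ the unique critical point of $\xi\mapsto-\int\log(\xi-t)\,\mu(dt)$ in that gap. Expanding $\Phi_N$ to second order at $\xi^*$ I expect to obtain
\[
\log S_\rho(e^{y/\sqrt{vN}};N) = y\, m \sqrt{N/v} + \tfrac{y^2}{2} + o(1),
\]
the coefficients being identified via the Stieltjes transform of $\mu$ and its derivative at $\xi^*_0$. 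Direct evaluation of these integrals using the piecewise-constant limit shape $f$ and the defining relations~\eqref{mv} should recover $m=uq$ and $v=u(u+1)q(1-q)$.

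For the multivariable statement, I would apply the Weyl bialternant formula to $s_\rho(x_1,\ldots,x_k,1^{N-k})$ and perform the coalescent limit $x_{k+1},\ldots,x_N\to 1$ via L'Hopital-type desingularization, which yields a representation of the shape $\bigl(\prod_{i<j}(x_i-x_j)\bigr)^{-1}\det\bigl[\Psi_j(x_i)\bigr]_{i,j=1}^k$ with each $\Psi_j$ a single-variable contour integral of the same type as above. For $y_i$ bounded and bounded away from $0$, the $k$ saddle points $\xi^*(y_i)$ are distinct at scale $O(1/\sqrt{N})$; the determinantal structure then factorizes the leading asymptotic into a product of single-variable limits, producing the claimed $\prod_i e^{x_i^2/2}$ after the mean correction. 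Uniformity on compact subsets of $(\mathbb{R}\setminus\{0\})^k$ should then follow from uniform steepest-descent bounds.

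The hardest part will be the bookkeeping of subleading corrections: one must verify that the $O(y/\sqrt{N})$ contributions arising from Stirling, from $(x-1)^{1-N}$, and from the saddle shift $\xi^*-\xi^*_0$ combine to give exactly the coefficients $m\sqrt{N/v}$ and $1/2$, with no leftovers. A secondary subtlety is that the support of $\mu$ is disconnected, so the steepest-descent contour must be threaded through the gap between the two intervals — a feature absent from the classical one-cut setting — which requires an explicit check that the chosen saddle is nondegenerate and dominates all other critical configurations.
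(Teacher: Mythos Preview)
The paper does not give its own proof of this lemma: it is stated with the citation \cite{gp} and followed only by the remark that it is a special case of more general results proved there. So there is nothing in the paper to compare your argument against; the author treats this as a black box imported from Gorin--Panova.

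Your sketch is a reasonable outline of the method actually used in \cite{gp}: the single-variable contour integral representation of $S_\rho(x;N)$, steepest-descent analysis with the empirical measure of the shifted coordinates $\ell_j/N$ converging to a piecewise-uniform limit, and a multivariable reduction via the Weyl bialternant/determinantal structure. That said, what you have written is a plan, not a proof. The points you flag yourself --- matching the subleading coefficients to recover exactly $m$ and $v$ from \eqref{mv}, and threading the contour through the gap of the two-interval support --- are precisely where the work lies, and nothing in your outline verifies them. In particular, the claim that the multivariable determinant factorizes into a product because the saddles are ``distinct at scale $O(1/\sqrt{N})$'' is delicate: the Vandermonde prefactor $\prod_{i<j}(x_i-x_j)^{-1}$ is itself singular at that scale, so one needs a careful expansion of the determinant (or the argument in \cite{gp} that handles this cancellation) rather than a bare appeal to distinctness. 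If you intend to actually prove the lemma rather than cite it, you would need to carry out those computations; otherwise, citing \cite{gp} as the paper does is the appropriate course.
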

\begin{remark}
In fact, this is a special case of more general results proved in \cite{gp}.
\end{remark}

\section{Multivariate generating function for corner distributions}
\subsection{Distributions of $\Gamma_{\ell}$ in lozenge tilings} Recall that we consider the uniform probability measure on the set of lozenge tilings $\mathrm{LT}(a,b,c)$, and $\Gamma_{\ell}$ is the number of left corners of height $\ell \in [1,c]$ in a random lozenge tiling. 
Throughout the section (and later) we set $\rho = (a^b)$, $N = b + c$, and $\Gamma^{(c)} = (\Gamma_1, \ldots, \Gamma_c)$.
\begin{theorem}\label{pgenf}
We have the generating function
\begin{align}\label{mul}
\sum_{\gamma \in \mathbb{N}^c} 
\mathrm{P}(\Gamma^{(c)} = \gamma)\,  
\mathbf{x}^{\gamma} = S_{\rho}(x_1, \ldots, x_c; N).
\end{align}
\end{theorem}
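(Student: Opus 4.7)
The plan is to unpack the generating function directly using the dictionary between lozenge tilings and boxed plane partitions, and then to apply the coincidence lemma in the form of \eqref{gx}.

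First, I would recall that writing the height of each top tile gives a bijection $\mathrm{LT}(a,b,c) \to \mathrm{PP}(a,b,c)$ under which a left corner at height $\ell$ in $T$ corresponds exactly to a column of the associated plane partition $\pi$ whose topmost entry is $\ell$; equivalently, to a column of $\pi$ containing the entry $\ell$. Thus $\Gamma_\ell(T) = c_\ell(\pi)$ in the notation of Lemma~\ref{phip}(i), and the uniform measure on $\mathrm{LT}(a,b,c)$ pushes forward to the uniform measure on $\mathrm{PP}(a,b,c)$ with partition function $Z_{abc} = s_{(a^b)}(1^{b+c})$.

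Next, I would rewrite the left-hand side of \eqref{mul} as
\begin{align*}
\sum_{\gamma \in \mathbb{N}^c} \mathrm{P}(\Gamma^{(c)} = \gamma)\, \mathbf{x}^\gamma
&= \frac{1}{Z_{abc}} \sum_{\pi \in \mathrm{PP}(a,b,c)} x_1^{c_1(\pi)} \cdots x_c^{c_c(\pi)}.
\end{align*}
Splitting the sum by the shape $\lambda = \mathrm{sh}(\pi)$, which ranges over partitions $\lambda \subset (a^b)$, and noting that a plane partition of shape $\lambda$ with maximum entry at most $c$ is exactly an element of $\mathrm{PP}_c(\lambda)$, the inner sum becomes $g_\lambda(x_1,\ldots,x_c)$ by definition of the dual Grothendieck polynomial. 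Therefore
\begin{align*}
\sum_{\pi \in \mathrm{PP}(a,b,c)} x_1^{c_1(\pi)} \cdots x_c^{c_c(\pi)} = \sum_{\lambda \subset (a^b)} g_\lambda(x_1,\ldots,x_c).
\end{align*}

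Finally, I would invoke the identity \eqref{gx} (a corollary of the coincidence lemma, Lemma~\ref{l1}) to collapse this sum of $g_\lambda$'s to $s_{(a^b)}(1^b, x_1, \ldots, x_c)$. Dividing by $Z_{abc} = s_{(a^b)}(1^{b+c})$ gives exactly the normalized Schur polynomial $S_\rho(x_1,\ldots,x_c; N)$ with $N = b+c$, proving the theorem.

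All three steps are essentially bookkeeping once the bijection $T \leftrightarrow \pi$ and the $K$-theoretic identity \eqref{gx} are in hand; there is no real obstacle. If anything were to be subtle, it would be the identification $\Gamma_\ell = c_\ell(\pi)$: one must check that every column of $\pi$ of height $\ge \ell$ containing $\ell$ produces exactly one left-corner configuration at height $\ell$ in the lozenge picture, and conversely. This is immediate from the three-dimensional interpretation of $\pi$ as a stack of unit cubes (each such column contributes a unique top tile at height $\ell$ which sits in a left-corner configuration), but it is the only combinatorial content worth verifying carefully before the algebraic manipulation proceeds.
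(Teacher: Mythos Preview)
Your proposal is correct and follows essentially the same route as the paper's own proof: identify $\Gamma_\ell$ with $X_\ell = c_\ell(\pi)$ via the lozenge--plane-partition bijection, group the sum over $\mathrm{PP}(a,b,c)$ by shape to get $\sum_{\lambda\subset(a^b)} g_\lambda$, and then apply \eqref{gx} together with $Z_{abc}=s_{(a^b)}(1^{b+c})$. One small verbal slip: a left corner at height $\ell$ does \emph{not} correspond to a column whose \emph{topmost} entry is $\ell$ (e.g., column $1$ in Fig.~\ref{bir} has topmost entry $3$ but contributes left corners at heights $3$, $2$, and $1$); the correct identification is with a column \emph{containing} $\ell$, i.e., a descent $\pi_{ij}=\ell>\pi_{i+1,j}$, which is exactly your second formulation and what you actually use.
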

\begin{proof}
By definition of $g_{\lambda}$ and the bijection between lozenge tilings and plane partitions so that $\Gamma_i \mapsto X_i$ we have 
\begin{align}
\sum_{\gamma \in \mathbb{N}^c} 
\mathrm{P}(\Gamma^{(c)} = \gamma)\,  
\mathbf{x}^{\gamma} 
= \sum_{\gamma \in \mathbb{N}^c} \mathrm{P}(X_1 = \gamma_1, \ldots, X_c = \gamma_c)\, x^{\gamma} = \frac{1}{Z_{abc}} \sum_{\lambda \subset \rho} g_{\lambda}(x_1, \ldots, x_c).
\end{align}
Using \eqref{gx} and the fact that $Z_{abc} = s_{\rho}(1^{b+c})$ we obtain
$$
\frac{1}{Z_{abc}} \sum_{\lambda \subset \rho} g_{\lambda}(x_1, \ldots, x_c) = \frac{s_{\rho}(1^b, x_1, \ldots, x_c)}{s_{\rho}(1^{b+c})} = S_{\rho}(x_1, \ldots, x_c; N).
$$
as needed.
\end{proof}

\begin{corollary}[Symmetry, exchangeability]\label{exch}
$\mathrm{P}(\Gamma^{(c)} = \gamma)$ is invariant under permutations of  the coordinates $(\gamma_1, \ldots, \gamma_c)$. 
\end{corollary}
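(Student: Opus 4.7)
The plan is to deduce the corollary immediately from Theorem~\ref{pgenf} by exploiting the symmetry of Schur polynomials in their arguments. By that theorem, the multivariate probability generating function of $\Gamma^{(c)}$ equals the normalized Schur polynomial $S_{\rho}(x_1, \ldots, x_c; N)$, which in turn is a constant multiple of $s_{\rho}(x_1, \ldots, x_c, 1^{N-c})$.

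The key observation is that $s_{\rho}$ is a symmetric polynomial in its variables. Hence, for any permutation $\sigma \in S_c$, we have
\begin{equation*}
s_{\rho}(x_{\sigma(1)}, \ldots, x_{\sigma(c)}, 1^{N-c}) = s_{\rho}(x_1, \ldots, x_c, 1^{N-c}),
\end{equation*}
since the variables $1^{N-c}$ are fixed and the remaining variables may be freely permuted. Dividing by $s_{\rho}(1^N)$ shows that $S_{\rho}(x_1, \ldots, x_c; N)$ is symmetric in $(x_1, \ldots, x_c)$.

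Therefore, extracting the coefficient of $\mathbf{x}^{\gamma} = x_1^{\gamma_1} \cdots x_c^{\gamma_c}$ from both sides of \eqref{mul}, the symmetry of the generating function forces
\begin{equation*}
\mathrm{P}(\Gamma_1 = \gamma_1, \ldots, \Gamma_c = \gamma_c) = \mathrm{P}(\Gamma_1 = \gamma_{\sigma(1)}, \ldots, \Gamma_c = \gamma_{\sigma(c)})
\end{equation*}
for every permutation $\sigma$, which is exactly exchangeability. There is no real obstacle here; the only substantive ingredient, Theorem~\ref{pgenf}, has already been established, and symmetry of $s_{\rho}$ is standard.
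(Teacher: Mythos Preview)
Your proof is correct and matches the paper's intended argument: the corollary is stated without proof immediately after Theorem~\ref{pgenf}, precisely because it follows at once from the symmetry of $s_{\rho}$ (and hence of $S_{\rho}(x_1,\ldots,x_c;N)$) in its variables. There is nothing to add.
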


\begin{corollary}\label{cor1} Let $k \in [1,c]$. Setting $x_{k+1}= \cdots = x_c = 1$ we have
$$
\sum_{\gamma \in \mathbb{N}^k} 
\mathrm{P}(\Gamma^{(k)} = \gamma)\,  
\mathbf{x}^{\gamma} = S_{\rho}(x_1, \ldots, x_k; N).
$$
More generally, for $\{\ell_1, \ldots, \ell_k \} \subset [1,c]$ we also have
\begin{align}\label{mulp}
\sum_{\gamma \in \mathbb{N}^k} 
\mathrm{P}(\Gamma_{\ell_1} = \gamma_1, \ldots, \Gamma_{\ell_k} = \gamma_k )\,  
\mathbf{x}^{\gamma} = S_{\rho}(x_{\ell_1}, \ldots, x_{\ell_k}; N).
\end{align}
\end{corollary}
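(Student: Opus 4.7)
The plan is to derive both statements directly from Theorem~\ref{pgenf} combined with the exchangeability provided by Corollary~\ref{exch}, without any new computation. The first identity is a pure specialization, and the second is obtained from the first by relabeling coordinates using symmetry.

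First I would rewrite the generating function identity of Theorem~\ref{pgenf} and set $x_{k+1} = \cdots = x_c = 1$ on both sides. On the left-hand side, setting $x_i = 1$ removes the weight $x_i^{\gamma_i}$ and the sum over $\gamma_i \in \mathbb{N}$ then amounts to marginalizing $\Gamma_i$ out of the joint distribution. Performing this for $i = k+1, \ldots, c$ collapses the $c$-variate generating series into the $k$-variate series
\[
\sum_{\gamma \in \mathbb{N}^k} \mathrm{P}(\Gamma_1 = \gamma_1, \ldots, \Gamma_k = \gamma_k)\, x_1^{\gamma_1} \cdots x_k^{\gamma_k}
= \sum_{\gamma \in \mathbb{N}^k} \mathrm{P}(\Gamma^{(k)} = \gamma)\, \mathbf{x}^{\gamma}.
\]
On the right-hand side, the same substitution gives $S_{\rho}(x_1, \ldots, x_k, 1, \ldots, 1; N)$, which by the very definition of the normalized Schur polynomial equals $S_{\rho}(x_1, \ldots, x_k; N)$ (the extra $1$'s are absorbed into the block $1^{N-k}$). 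This proves the first assertion.

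For the more general statement involving arbitrary indices $\ell_1, \ldots, \ell_k \in [1,c]$, I would invoke the exchangeability of $(\Gamma_1, \ldots, \Gamma_c)$ established in Corollary~\ref{exch}. By symmetry, the joint law of $(\Gamma_{\ell_1}, \ldots, \Gamma_{\ell_k})$ equals the joint law of $(\Gamma_1, \ldots, \Gamma_k)$, so their generating functions coincide. Since the right-hand side $S_{\rho}(x_{\ell_1}, \ldots, x_{\ell_k}; N)$ is (by symmetry of the Schur polynomial in its variables) a mere relabeling of $S_{\rho}(x_1, \ldots, x_k; N)$, formula \eqref{mulp} follows immediately from the first part.

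There is no genuine obstacle here; the corollary is a formal consequence of Theorem~\ref{pgenf} and Corollary~\ref{exch}. The only point worth being explicit about is that plugging $x = 1$ into the generating function is justified because the formal series on the left converges absolutely at $\mathbf{x} = 1$ (it sums to $1$ since it is a probability generating function), which makes the term-by-term marginalization rigorous rather than purely formal.
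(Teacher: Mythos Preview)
Your proposal is correct and follows exactly the approach the paper intends: the corollary is stated without proof because it is a direct specialization of Theorem~\ref{pgenf} together with the exchangeability in Corollary~\ref{exch}, precisely as you outline. One tiny streamlining: for the second part you could alternatively set $x_i = 1$ for all $i \notin \{\ell_1,\ldots,\ell_k\}$ directly in Theorem~\ref{pgenf} and use the symmetry of $S_\rho$ in its arguments, which avoids invoking Corollary~\ref{exch} separately, but your route via exchangeability is equally valid.
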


\begin{corollary}\label{corsin} For a single variable we have
\begin{align}\label{single}
\sum_{n} \mathrm{P}(\Gamma_{\ell} = n)\, x^n = S_{\rho}(x; N). 
\end{align}
In particular, $\mathrm{P}(\Gamma_i = n) = \mathrm{P}(\Gamma_j = n)$ for all $i,j \in [1,c]$.
\end{corollary}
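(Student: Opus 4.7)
The plan is to deduce this corollary directly as the single-variable specialization of Corollary~\ref{cor1}. First, I would apply the second formula in Corollary~\ref{cor1} in the case $k = 1$, with the single index $\ell_1 = \ell$. This immediately produces
\begin{equation*}
\sum_{n \in \mathbb{N}} \mathrm{P}(\Gamma_\ell = n)\, x^n \;=\; S_\rho(x_\ell; N),
\end{equation*}
and since the right-hand side is a polynomial in one variable, its value does not depend on the name $x_\ell$ of that variable: relabeling $x_\ell \mapsto x$ yields $S_\rho(x; N)$. This gives \eqref{single}.

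For the ``in particular'' statement, I would note that the generating function on the right has no dependence on $\ell$, so the generating functions of the distributions of $\Gamma_i$ and of $\Gamma_j$ coincide for all $i, j \in [1,c]$; since a probability generating function determines the distribution, we conclude $\mathrm{P}(\Gamma_i = n) = \mathrm{P}(\Gamma_j = n)$ for every $n \in \mathbb{N}$. Alternatively, this identity in distribution also follows from the exchangeability statement in Corollary~\ref{exch}, applied to the single-coordinate marginals.

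There is no real obstacle: the result is a direct specialization, and the only thing one must observe is that $S_\rho(x_{\ell}; N)$ as a polynomial in one variable is independent of the choice of index $\ell$.
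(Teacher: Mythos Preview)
Your proposal is correct and matches the paper's intended derivation: Corollary~\ref{corsin} is stated without a separate proof precisely because it is the $k=1$ specialization of Corollary~\ref{cor1}, with the equality of distributions then immediate from the $\ell$-independence of the right-hand side (or equivalently from Corollary~\ref{exch}).
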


We are now going to prove the limiting distributions for $\Gamma$ stated in Theorem~\ref{gam}. 
\begin{proof}[Proof of Theorem~\ref{gam}]
Suppose we have collection of random variables $\Gamma := (\Gamma_{\ell_1}, \ldots, \Gamma_{\ell_k} )$. 

In the first Poisson regime $a,b \to \infty$ with $ab/c \to t$ by Lemma~\ref{bpt} we have
$$
S_{\rho}(x_{\ell_1}, \ldots, x_{\ell_k}; N) \to e^{-kt} \prod_{i = 1}^{k} e^{x_{\ell_i}}. 
$$
Hence by \eqref{mulp}, $\Gamma$ weakly converges to $k$ 
i.i.d. Poisson random variables with parameter $t$.

In the second negative binomial regime for fixed $b$ and $a,c \to \infty$ with $a/(a  + c) \to q \in (0,1)$ again by Lemma~\ref{bpt} we have 
$$
S_{\rho}(x_{\ell_1}, \ldots, x_{\ell_k}; N) \to (1 - q)^{bk} \prod_{i = 1}^{k} (1 - x_{\ell_i} q)^{-b} 
$$
Hence by \eqref{mulp}, $\Gamma$ weakly converges to $k$
i.i.d. negative binomial random variables with parameters $b,q$.

Finally, in the third Gaussian regime by \eqref{single} and Lemma~\ref{gp} for $k = 1$ 
we obtain that  $\frac{\Gamma_{\ell} - m N}{ \sqrt{v N}}$ weakly converges to $N(0,1)$ as $N \to \infty$, where $v = \sigma^2$. By Lemma~\ref{gp} and \eqref{mulp}, we get factorization of the characteristic functions in the limit which is equivalent to asymptotic independence. Hence the collection of variables  $\frac{\Gamma_{\ell_i} - m N}{\sqrt{v N}}$ become asymptotically independent as $N \to \infty$.
\end{proof}

\subsection{Distributions of $X_{\ell}$ in plane partitions and marginals $C_{\ell}$ in $\mathbb{N}$-matrices}
The results on $\Gamma_{\ell}$ easily translate to results on $X_{\ell}$ on random boxed plane partitions and $C_{\ell}$ on $\mathbb{N}$-matrices with bounded last passage time, discussed in the introduction. 

Take a uniformly random plane lozenge tilings $T \in \mathrm{LT}(a,b,c)$ and let $\Psi$ be the map that records heights of all top tiles, the result is clearly a plane partition $\pi = \Psi(T) \in \mathrm{PP}(a,b,c)$ and $\Psi$ is a bijection. We have $\Gamma_{\ell}(D) = X_{\ell}(\pi)$. 

Now take a uniformly random matrix $D \in \mathrm{BM}(a,b,c)$, then by Lemma~\ref{phip} (i), we have $C_{\ell}(D) = X_{\ell}(\pi)$ for $\pi = \Phi^{-1}(D)$. 

\section{Joint distributions of left corners and right lozenges}\label{joint}
\begin{definition}[Right lozenge distributions]
Let $Y^{(k)}$ be the vector of positions i.e. $y$-coordinates 
of the right tiles  
\begin{tikzpicture}[scale = 0.25]
\rightside{0}{0}{0}
\end{tikzpicture} 
 along $k$-th  
 slice parallel to the side $a$ of the hexagon, see Fig.~\ref{eki}. For $k \le \min(b,c)$, these positions have the form 
 $$
 \mu_1 \ge \cdots \ge \mu_k \ge 0.
 $$
\end{definition}

\begin{figure}
\begin{tikzpicture}[scale = 0.45]
\foreach \c in {1,...,5} {
  \foreach \d in {1,...,3}{
	\leftside{0}{\c}{\d};
  }
}

\foreach \c in {1,...,4} {
  \foreach \d in {1,...,3}{
	\rightside{\c}{0}{\d};
  }
}

\foreach \c in {1,...,4} {
  \foreach \d in {1,...,5}{
	\topside{\c}{\d}{0};
  }
}

\planepartition{
{3,3,2,2,2},
{3,2,2,2},
{2,2,1,1},
{1,1,0,0}}

\node at (-1.5,-3.7) {$a$};
\node at (2.8,-4.0) {$b$};
\node at (4.8,-0.5) {$c$};

\draw[thick,->, dashed] (4.3,-2.5) -- (6,-3.5);
\node at (6.5,-3) {$y$};

\draw[thick, dashed, darkgray] (-5,1.8) -- (5,-3.9);
\node at (5.4,-4.3) {$k = 3$};

\node at (2.2,-2.3) {$\bullet$};
\node at (-0.4,-0.75) {$\bullet$};
\node at (-3,0.7) {$\bullet$};

\end{tikzpicture}
\caption{An example of $k$-th slice parallel to $a$ and the corresponding right tiles marked with $\bullet$. Here $Y^{(3)} = (4,2,0)$, the $y$-coordinates of marked tiles.}\label{eki}
\end{figure}
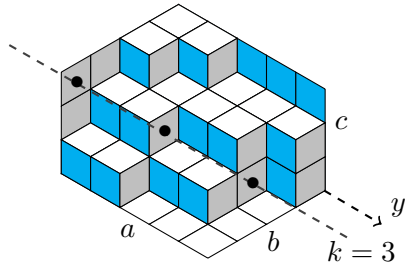

Let $\mathcal{P}_{n,k} := \{\mu : \mu\vdash n,\, \ell(\mu) \le k \}$ be the set of partitions of $n$ with length at most $k$.
We have the following Schur expansion (see \cite{bp}) of normalized Schur polynomials
\begin{align}\label{sno}
S_{\rho}(x_1, \ldots, x_k; N) = \sum_{\mu \in \mathcal{P}_{n,k}} \mathrm{P}(Y^{(k)} = \mu) \frac{s_{\mu}(x_1, \ldots, x_k)}{s_{\mu}(1^k)} 
\end{align}
where $\rho = (a^b)$, $N  = b + c$, and $k \le \min(b,c)$.
The distribution of $Y$ has the formula (e.g.~\cite{bp}) 
$$
\mathrm{P}(Y^{(k)} = \mu) = \mathrm{const}_{a,b,c,k} \prod_{1 \le i < j \le k} (\mu_i - i - \mu_j + j)^2 \prod_{i = 1}^k w(\mu_i + k - i), 
$$
where 
$$
w(y) = \frac{(a + c - 1 - y)!\, (b - k + y)!}{y!\, (a + k - 1 - y)!}
$$
which is a case of more general {\it orthogonal polynomial ensembles}.
Alternatively, there is also another formula (e.g. \cite[Prop.~5.3]{gp})
\begin{align}\label{yf}
\mathrm{P}(Y^{(k)} = \mu) = \frac{1}{Z_{abc}} s_{\mu}(1^k) s_{\mu^c}(1^{b + c - k}),
\end{align}
where $\mu^c = (a - \mu_b, \ldots, a - \mu_{1})$ is the complement to $\mu$ in  $\rho$.

It is proved in \cite{jn, gp} that for every $k$, as $N \to \infty$ we have 
convergence in distribution 
$$
\frac{1}{\sqrt{v N}}{(Y^{(k)} - (m N, \ldots, mN))} 
\to \mathrm{GUE}_k,
$$
where $\mathrm{GUE}_k$ is the distribution of the spectrum 
of a random $k \times k$ Hermitian matrix from the {\it Gaussian unitary ensemble} (GUE), and $m, v$ are defined in \eqref{mv} (cf. Lemma~\ref{gp}). 

\

Now we would like to relate the two distributions $\Gamma$ and $Y$. 

Recall that the {\it Kostka numbers} $K_{\lambda \gamma} \in \mathbb{N}$ (see e.g. \cite{mac, sta}) are coefficients defined from the monomial expansion of Schur polynomials
$$
s_{\lambda} = \sum_{\gamma} K_{\lambda \gamma}\, \mathbf{x}^{\gamma}.
$$

Consider the random vectors
$$\Gamma^{(k)} 
= (\Gamma_1, \ldots, \Gamma_k) 
$$ 
\begin{theorem}[Joint distribution of left corners and right tiles]
Let $\gamma = (\gamma_1, \ldots, \gamma_k) \in \mathbb{N}^k$ with $|\gamma| := \gamma_1 + \cdots + \gamma_k = n$ for $k \le \min(b,c)$ and $\mu \in \mathcal{P}_{n,k}$. We have: 

(i) The joint distribution of $\Gamma^{(k)}$ and $Y^{(k)}$ is given by 
\begin{align*}
\mathrm{P}(\Gamma^{(k)} = \gamma,\, Y^{(k)} = \mu ) 
= \frac{1}{Z_{a b c}} K_{\mu \gamma}\, s_{\mu^{c}}(1^{b + c - k}),
\end{align*}
where $K_{\mu \gamma}$ is the Kostka number and $\mu^c = (a - \mu_b, \ldots, a - \mu_{1})$ is complement to $\mu$ in  $(a^b)$.

\vspace{0.5em}

(ii) The distribution of $\Gamma^{(k)}$ can be computed as follows
\begin{align*}
\mathrm{P}(\Gamma^{(k)} = \gamma ) = \sum_{\mu\, \in\, \mathcal{P}_{n,k}} p_{\mu}(\gamma)\, \mathrm{P}(Y^{(k)} = \mu ),  
\end{align*}
where the conditional probability is determined as
\begin{align*}
p_{\mu}(\gamma) = \mathrm{P}(\Gamma^{(k)} = \gamma\, |\, Y^{(k)} = \mu)= \frac{K_{\mu\, \gamma}}{s_{\mu}(1^k)}.
\end{align*}
\end{theorem}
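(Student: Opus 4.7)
My plan is to prove (i) by a combinatorial factorization along the slice $k$, and then to obtain (ii) either as a consequence of (i) or by a parallel algebraic manipulation.

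\medskip

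For (i), the key idea is to use the standard bijection between lozenge tilings of the hexagon and sequences of interlacing partitions coming from the non-intersecting lattice path model. Cutting at the slice $k$ parallel to side $a$, any tiling $T$ decomposes into an upper piece (between one boundary of the hexagon and the slice) and a lower piece (between the slice and the opposite boundary), where the middle partition is precisely $Y^{(k)}(T) = \mu$. The upper piece corresponds bijectively to a sequence of interlacing partitions $\varnothing = \nu^{(0)} \subset \nu^{(1)} \subset \cdots \subset \nu^{(k)} = \mu$ with each $\nu^{(i)}/\nu^{(i-1)}$ a horizontal strip, i.e.\ to an SSYT $S$ of shape $\mu$ with entries in $[1,k]$. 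I would verify that, under this bijection, the number of cells of $S$ with entry $\ell$ is exactly $\Gamma_{\ell}(T)$, so the content of $S$ coincides with $\Gamma^{(k)}(T)$. The lower piece is then a tiling of the complementary trapezoidal region, enumerated by $s_{\mu^c}(1^{b+c-k})$ (via Lindström--Gessel--Viennot, or by specializing \eqref{yf} to all variables equal to $1$), independently of $\gamma$. Combining these counts gives
$$
\#\{T \in \mathrm{LT}(a,b,c) : \Gamma^{(k)}(T) = \gamma,\ Y^{(k)}(T) = \mu\} = K_{\mu \gamma}\, s_{\mu^c}(1^{b+c-k}),
$$
because the number of SSYT of shape $\mu$ with entries in $[1,k]$ and content $\gamma$ is $K_{\mu\gamma}$. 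Dividing by $Z_{abc} = |\mathrm{LT}(a,b,c)|$ yields (i).

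\medskip

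For (ii), one option is simply to marginalize (i) over $\mu$ and use \eqref{yf}, which cleanly produces the conditional $p_{\mu}(\gamma) = K_{\mu \gamma}/s_{\mu}(1^k)$ as stated. Alternatively, (ii) can be extracted purely algebraically: combine the generating function $\sum_{\gamma} \mathrm{P}(\Gamma^{(k)} = \gamma) \mathbf{x}^{\gamma} = S_{\rho}(x_1, \ldots, x_k; N)$ from Corollary~\ref{cor1} with the Schur expansion \eqref{sno} and the monomial expansion $s_{\mu}(x_1, \ldots, x_k) = \sum_{\gamma} K_{\mu\gamma}\, \mathbf{x}^{\gamma}$, then match coefficients of $\mathbf{x}^{\gamma}$ on both sides.

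\medskip

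The main obstacle is the identification in the combinatorial factorization: checking that, under the path / interlacing-partition bijection, each cell of $S$ labelled $\ell$ really corresponds to a left corner of height $\ell$ in $T$. This is a bookkeeping step, but care is needed to align the $z$-heights used to define $\Gamma_{\ell}$ with the increments $\nu^{(\ell-1)} \subset \nu^{(\ell)}$ in the partition sequence on the upper piece. The remaining ingredients — LGV for the lower piece, the Schur/Kostka expansion, and the marginal formula \eqref{yf} — are standard, so once that identification is pinned down the theorem follows by a short count.
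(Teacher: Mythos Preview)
Your algebraic alternative for (ii) is precisely the paper's route: combine Corollary~\ref{cor1}, the Schur expansion~\eqref{sno}, and the monomial (Kostka) expansion of $s_\mu$, then match coefficients of $\mathbf{x}^\gamma$. In fact the paper runs the argument in the opposite order from your plan: it first obtains the marginal formula in (ii) this way, reads off $p_\mu(\gamma)=K_{\mu\gamma}/s_\mu(1^k)$ as the conditional probability, and then multiplies by $\mathrm{P}(Y^{(k)}=\mu)$ from~\eqref{yf} to get (i). So your ``alternative'' is actually the main line, and once you have it there is no need for the combinatorial factorization at all.

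Your combinatorial route to (i), however, has a genuine gap at exactly the spot you flag as the main obstacle, and it is not a bookkeeping issue. Under the standard interlacing/GT bijection between tilings and SSYT of shape $(a^b)$ with entries in $[1,b+c]$, the content of the sub-tableau on entries $\le k$ is a \emph{slice} statistic (the size of each horizontal strip between consecutive slices), whereas $\Gamma_\ell$ is a \emph{height} statistic (left corners at $z$-level $\ell$). These are not equal tiling by tiling; already their totals disagree --- for the tiling in Figures~\ref{bir} and~\ref{eki} one has $\Gamma_1+\Gamma_2+\Gamma_3=11$ but $|Y^{(3)}|=6$, so no SSYT of shape $Y^{(3)}$ can have content $\Gamma^{(3)}$. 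What \emph{is} true is that the two statistics have the same distribution: that equality of generating functions is exactly Theorem~\ref{pgenf}, established via dual Grothendieck polynomials and the Coincidence Lemma~\ref{l1}, and the remark after Lemma~\ref{l1} explicitly warns that this coincidence is ``not at all obvious combinatorially.'' Consequently the pointwise identification you hope to verify is false, and the factorization argument as written does not prove (i); the paper sidesteps this by staying on the algebraic side throughout.
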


\begin{proof}
By Corollary~\ref{cor1} 
and the Schur polynomial expansion \eqref{sno} we have 
\begin{align*}
\sum_{\gamma \in \mathbb{N}^k} \mathrm{P}(\Gamma^{(k)} = \gamma)\, x^{\gamma} &= S_{\rho}(x_1, \ldots, x_k; N)\\ 
&= \sum_{\mu \in \mathcal{P}_{n,k}} \mathrm{P}(Y^{(k)} = \mu) \frac{s_{\mu}(x_1, \ldots, x_k)}{s_{\mu}(1^k)}\\ 
&=  \sum_{\mu \in \mathcal{P}_{n,k}} \mathrm{P}(Y^{(k)} = \mu) \sum_{\alpha} \frac{K_{\mu \alpha}}{s_{\mu}(1^k)} x^{\alpha}.
\end{align*}
Therefore, 
$$
\mathrm{P}(\Gamma^{(k)} = \gamma) = \sum_{\mu \in \mathcal{P}_{n,k}} \frac{K_{\mu \gamma}}{s_{\mu}(1^k)} \mathrm{P}(Y^{(k)} = \mu),
\qquad
\frac{K_{\mu \gamma}}{s_{\mu}(1^k)} = \mathrm{P}(\Gamma^{(k)} = \gamma\, |\, Y^{(k)} = \mu).
$$
To find the joint distribution we have 
\begin{align*}
\mathrm{P}(\Gamma^{(k)} = \gamma, Y^{(k)} = \mu) &= \mathrm{P}(\Gamma^{(k)} = \gamma\, |\, Y^{(k)} = \mu) \cdot \mathrm{P}(Y^{(k)} = \mu) \\
&= \frac{K_{\mu \gamma}}{s_{\mu}(1^k)} \cdot \frac{s_{\mu}(1^k) s_{\mu^c}(1^{b + c - k})}{Z_{abc}} \\
&= \frac{K_{\mu \gamma}\, s_{\mu^c}(1^{b + c - k})}{Z_{abc}}
\end{align*}
where we used the formula \eqref{yf}.
\end{proof}

Now using some properties of Kostka numbers we obtain the following inequalities. 
\begin{corollary}
Since $K_{\mu \mu} = 1$  we get the following inequality for $\mu \in \mathcal{P}_{n,k}$
$$
s_{\mu}(1^k)\, \mathrm{P}(\Gamma^{(k)} = \mu ) \ge \mathrm{P}(Y^{(k)} = \mu).
$$
\end{corollary}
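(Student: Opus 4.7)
The plan is to apply the distributional decomposition from part (ii) of the preceding theorem and isolate a single term. Specifically, writing the identity
\[
\mathrm{P}(\Gamma^{(k)} = \gamma) \;=\; \sum_{\nu\,\in\,\mathcal{P}_{n,k}} \frac{K_{\nu\gamma}}{s_{\nu}(1^k)}\,\mathrm{P}(Y^{(k)} = \nu)
\]
with the choice $\gamma = \mu$, I would simply restrict the sum on the right to the single index $\nu = \mu$ and discard the rest.

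The key observation is that each term of the sum is nonnegative: the Kostka numbers $K_{\nu\mu}$ are nonnegative integers (they count semistandard Young tableaux), the values $s_{\nu}(1^k)$ are positive, and the probabilities $\mathrm{P}(Y^{(k)} = \nu)$ are nonnegative. Thus I would conclude that
\[
\mathrm{P}(\Gamma^{(k)} = \mu) \;\ge\; \frac{K_{\mu\mu}}{s_{\mu}(1^k)}\,\mathrm{P}(Y^{(k)} = \mu).
\]
Using the standard fact that $K_{\mu\mu} = 1$ (there is a unique semistandard tableau of shape $\mu$ and content $\mu$, namely the one with every box in row $i$ filled by $i$), this reduces to
\[
\mathrm{P}(\Gamma^{(k)} = \mu) \;\ge\; \frac{1}{s_{\mu}(1^k)}\,\mathrm{P}(Y^{(k)} = \mu),
\]
and multiplying through by $s_{\mu}(1^k) > 0$ yields the stated inequality.

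There is essentially no obstacle: the proof is a one-line consequence of the decomposition already established. The only sanity check worth performing is that the index $\mu$ does indeed lie in the range of summation, which is immediate since by hypothesis $\mu \in \mathcal{P}_{n,k}$ with $n = |\mu|$.
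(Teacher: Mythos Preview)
Your argument is correct and is exactly the immediate deduction the paper intends: the corollary is stated without proof precisely because it follows by isolating the $\nu=\mu$ term in the nonnegative sum from part~(ii) of the preceding theorem and using $K_{\mu\mu}=1$.
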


For $\alpha, \beta \in \mathbb{N}^{k}$, say that $\beta$ {\it dominates} $\alpha$ and write $\beta \succeq \alpha$ if $\sum_{i = 1}^{\ell} \beta_i \ge \sum_{i = 1}^{\ell} \alpha_i$ for all $\ell \in [1,k]$ and $\sum_{i = 1}^k \alpha_i = \sum_{i = 1}^k \beta_i$.

\begin{corollary}[Monotonicity]
The following inequality holds
$$
\forall\, \beta \succeq \alpha: \quad \mathrm{P}(\Gamma^{(k)} = \alpha ) \ge \mathrm{P}(\Gamma^{(k)} = \beta ).
$$
\end{corollary}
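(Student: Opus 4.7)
The plan is to reduce the monotonicity inequality to the classical monotonicity of Kostka numbers in the content (weight) argument, applied to the formula from part (ii) of the preceding theorem. That formula expresses $\mathrm{P}(\Gamma^{(k)} = \gamma)$ as a nonnegative linear combination
\[
\mathrm{P}(\Gamma^{(k)} = \gamma) = \sum_{\mu \in \mathcal{P}_{n,k}} \frac{K_{\mu \gamma}}{s_{\mu}(1^k)}\, \mathrm{P}(Y^{(k)} = \mu),
\]
in which only the Kostka coefficient $K_{\mu\gamma}$ depends on $\gamma$; all other factors are nonnegative and depend solely on $\mu$. So to compare $\mathrm{P}(\Gamma^{(k)} = \alpha)$ and $\mathrm{P}(\Gamma^{(k)} = \beta)$, it suffices to compare $K_{\mu\alpha}$ and $K_{\mu\beta}$ termwise in $\mu$.

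The key input I would invoke is the classical fact that, for a fixed shape $\mu$, the Kostka number $K_{\mu\gamma}$ is weakly decreasing in $\gamma$ with respect to the dominance order; that is, if $\beta \succeq \alpha$ (with $|\alpha| = |\beta|$), then $K_{\mu\beta} \le K_{\mu\alpha}$. This is standard and can be cited from Macdonald's book (I.1.11 and the surrounding discussion). One way to see it combinatorially is via a content-raising bijection: given an SSYT of shape $\mu$ and content $\beta$, since $\beta$ dominates $\alpha$ one can ``lower'' occurrences of larger entries to smaller ones in a weight-preserving injective way whose output has content $\alpha$, giving an injection from SSYTs of content $\beta$ into SSYTs of content $\alpha$. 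Equivalently, this follows from writing monomial symmetric functions $m_\alpha$ in terms of $m_\beta$ for $\beta\succeq\alpha$ and using Schur-monomial transition, but I would simply cite the monotonicity result.

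Combining these two ingredients, for every $\mu \in \mathcal{P}_{n,k}$ we have $K_{\mu\beta}\le K_{\mu\alpha}$, hence
\[
\mathrm{P}(\Gamma^{(k)} = \beta) = \sum_{\mu \in \mathcal{P}_{n,k}} \frac{K_{\mu \beta}}{s_{\mu}(1^k)}\, \mathrm{P}(Y^{(k)} = \mu) \le \sum_{\mu \in \mathcal{P}_{n,k}} \frac{K_{\mu \alpha}}{s_{\mu}(1^k)}\, \mathrm{P}(Y^{(k)} = \mu) = \mathrm{P}(\Gamma^{(k)} = \alpha),
\]
which is exactly the desired inequality. The only real content is the monotonicity of Kostka numbers; the probabilistic identity is already in hand, so there is no substantive obstacle, and the corollary follows in one line once that lemma is cited.
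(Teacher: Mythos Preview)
Your proposal is correct and follows essentially the same approach as the paper: both invoke the classical monotonicity $K_{\mu\alpha}\ge K_{\mu\beta}$ for $\beta\succeq\alpha$ (cited from Macdonald) and apply it termwise to the decomposition $\mathrm{P}(\Gamma^{(k)}=\gamma)=\sum_{\mu}p_{\mu}(\gamma)\,\mathrm{P}(Y^{(k)}=\mu)$ from part~(ii) of the preceding theorem. Your write-up simply makes explicit the nonnegativity of the remaining factors, which the paper leaves implicit.
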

\begin{proof} 
It is known that $K_{\mu \alpha} \ge K_{\mu \beta}$ for $\beta \succeq \alpha$ (see \cite{mac}). Hence $p_{\mu}(\alpha) \ge p_{\mu}(\beta)$ and subsequently $\mathrm{P}(\Gamma^{(k)} = \alpha) \ge \mathrm{P}(\Gamma^{(k)} = \beta)$. 
\end{proof}

\section{$g$-measure}\label{gmeas}
Let $q_1, \ldots, q_c \in (0,1)$. Recall that we define the probability distribution, called {\it $g$-measure} or $g$-distribution, on the (infinite) set $\mathrm{PP}(\infty, b, c)$ of plane partitions with at most $b$ rows and maximal entry at most $c$
$$
\mathrm{P}_{g,b,c}(\pi) = \frac{1}{Z_{b,c}} \prod_{(i,j) \in \mathrm{Des}(\pi)} q_{\pi_{i j}},
$$
where $Z_{b,c}$ is the normalization constant. 

\begin{proposition}\label{gme}
Let $\pi \in \mathrm{PP}(\infty, b, c)$ and $\lambda = (\lambda_1 \ge \cdots \ge \lambda_b \ge 0)$. We have  
$$
\mathrm{P}_{g,b,c}(\mathrm{sh}(\pi) = \lambda) = \frac{1}{Z_{b,c}}\, g_{\lambda}(q_1, \ldots, q_c).
$$ 
\end{proposition}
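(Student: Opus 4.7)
The plan is to observe that the weight $\prod_{(i,j)\in\mathrm{Des}(\pi)} q_{\pi_{ij}}$ attached to a single plane partition $\pi$ coincides with the monomial weight $\prod_\ell q_\ell^{c_\ell(\pi)}$ that appears in the combinatorial definition of $g_\lambda$. Once this identity is established cell-by-cell for an arbitrary plane partition of shape $\lambda$, the claimed formula follows simply by summing over $\pi\in \mathrm{PP}(\infty,b,c)$ with $\mathrm{sh}(\pi)=\lambda$.

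The key step is to analyze descents column by column. Fix a column index $j$ and look at the (weakly decreasing) entries $\pi_{1j}\ge\pi_{2j}\ge\cdots\ge 1$ sitting in column $j$, extending by the convention $\pi_{ij}=0$ outside the shape. For each value $\ell\in[1,c]$ that actually appears in column $j$, the entries equal to $\ell$ form a consecutive run of rows; the \emph{last} row index $i$ of that run satisfies $\pi_{i+1,j}<\ell$ (either because the next entry is smaller, or because we have left the shape and read $0$), whereas non-bottom rows of the run satisfy $\pi_{i+1,j}=\ell$. Consequently column $j$ contributes exactly one descent of value $\ell$ if $\ell$ appears in column $j$, and none otherwise. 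Summing over columns,
\begin{equation*}
\bigl|\{(i,j)\in\mathrm{Des}(\pi):\pi_{ij}=\ell\}\bigr| \;=\; c_\ell(\pi),
\end{equation*}
so that
\begin{equation*}
\prod_{(i,j)\in\mathrm{Des}(\pi)} q_{\pi_{ij}} \;=\; \prod_{\ell=1}^{c} q_\ell^{c_\ell(\pi)}.
\end{equation*}

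With this identity in hand, the proposition is immediate: summing over $\pi\in \mathrm{PP}(\infty,b,c)$ with $\mathrm{sh}(\pi)=\lambda$ (equivalently, over plane partitions of shape $\lambda$ with at most $b$ rows and maximum entry at most $c$), and recalling the definition $g_\lambda(q_1,\dots,q_c)=\sum_{T\in\mathrm{PP}_c(\lambda)} \prod_\ell q_\ell^{c_\ell(T)}$, we obtain
\begin{equation*}
\sum_{\pi:\,\mathrm{sh}(\pi)=\lambda}\prod_{(i,j)\in\mathrm{Des}(\pi)} q_{\pi_{ij}} \;=\; g_\lambda(q_1,\dots,q_c),
\end{equation*}
and dividing by $Z_{b,c}$ yields the claimed expression for $\mathrm{P}_{g,b,c}(\mathrm{sh}(\pi)=\lambda)$. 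As a byproduct, summing over all $\lambda$ with $\ell(\lambda)\le b$ and invoking identity~\eqref{gl} gives $Z_{b,c}=\prod_{i=1}^c(1-q_i)^{-b}$, reproducing the normalization stated in the introduction.

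There is no genuine obstacle here; the only point requiring a little care is the boundary convention ``ignore zeros,'' which must be handled so that the bottom cell of every column of $\pi$ is correctly counted as a descent. Once that convention is made explicit, the column-by-column bijection between descents of value $\ell$ and columns containing $\ell$ is completely elementary.
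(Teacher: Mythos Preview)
Your proof is correct and follows essentially the same approach as the paper: both arguments sum the descent weights over plane partitions of shape $\lambda$ and identify the result with $g_\lambda(q_1,\ldots,q_c)$. The paper's proof is a one-line chain of equalities that leaves the key identity $\prod_{(i,j)\in\mathrm{Des}(\pi)}q_{\pi_{ij}}=\prod_\ell q_\ell^{c_\ell(\pi)}$ implicit (``it is not difficult to see''), whereas you supply the column-by-column verification of that identity explicitly; this added detail is welcome and entirely in line with the paper's intent.
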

\begin{proof}
Using the definition of $g_{\lambda}$, it is not difficult to see that 
$$
\mathrm{P}_{g,b,c}(\lambda) =\sum_{\mathrm{sh}(\pi) = \lambda} \mathrm{P}_{b,c}(\pi) =\frac{1}{Z_{b,c}} \sum_{\mathrm{sh}(\pi) = \lambda} \prod_{(i,j) \in \mathrm{Des}(\pi)} q_{\pi_{i j}} = \frac{1}{Z_{b,c}} g_{\lambda}(q_1, \ldots, q_c),
$$
where the sum runs over $\pi \in \mathrm{PP}(\infty, b, c)$. 
\end{proof}

\begin{corollary}\label{znorm}
We have 
$$
Z_{b,c} = \prod_{i = 1}^{c} (1 - q_i)^{-b}.
$$
\end{corollary}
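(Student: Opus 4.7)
The plan is to combine Proposition~\ref{gme} with identity \eqref{gl} from the section on dual Grothendieck polynomials. Since $\mathrm{P}_{g,b,c}$ is, by construction, a probability measure on $\mathrm{PP}(\infty,b,c)$, summing the formula of Proposition~\ref{gme} over all partitions $\lambda$ with $\ell(\lambda) \le b$ and using that $\mathrm{PP}(\infty,b,c)$ is partitioned by shape yields the identity
\[
1 = \sum_{\ell(\lambda) \le b} \mathrm{P}_{g,b,c}(\mathrm{sh}(\pi) = \lambda) = \frac{1}{Z_{b,c}} \sum_{\ell(\lambda) \le b} g_{\lambda}(q_1, \ldots, q_c),
\]
so that $Z_{b,c} = \sum_{\ell(\lambda) \le b} g_{\lambda}(q_1, \ldots, q_c)$.

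Then I would apply the Cauchy-type identity \eqref{gl}, which asserts $\sum_{\ell(\lambda) \le b} g_{\lambda}(\mathbf{x}) = \prod_n (1 - x_n)^{-b}$, specialized to the finite alphabet $\mathbf{x} = (q_1, \ldots, q_c)$. This substitution immediately gives $Z_{b,c} = \prod_{i=1}^{c} (1 - q_i)^{-b}$, as claimed. There is no real obstacle here: the hypothesis $q_i \in (0,1)$ together with the fact that we only have $c$ variables guarantees absolute convergence of the specialization, and everything else is a direct consequence of the preceding proposition and lemma. The only point worth mentioning explicitly in the written proof is why the sum $\sum_{\ell(\lambda) \le b} \mathrm{P}_{g,b,c}(\mathrm{sh}(\pi) = \lambda)$ equals $1$, i.e.\ that the $g$-measure is well-defined as a probability distribution — which is itself forced by this very calculation, so the cleanest presentation is to first verify that the right-hand side of \eqref{gl} converges at $\mathbf{x} = (q_1, \ldots, q_c)$, set $Z_{b,c}$ to this value, and then check that the resulting weights sum to $1$.
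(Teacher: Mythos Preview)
Your proof is correct and essentially identical to the paper's, which states that the result follows from Proposition~\ref{gme} and identity~\eqref{gl}. Your worry about circularity is unnecessary: $Z_{b,c}$ is \emph{defined} as the normalization constant, so the shape probabilities sum to $1$ automatically, and the corollary simply evaluates this sum via~\eqref{gl}.
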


\begin{proof}
Follows from the proposition and the identity \eqref{gl}.
\end{proof}

\begin{theorem}\label{fpd} 
For $\pi \in \mathrm{PP}(\infty, b, c)$, let $\lambda = (\lambda_1 \ge \cdots \ge \lambda_b \ge 0)$ be the shape of $\pi$. 
The distribution of the first part $\lambda_1$ can be 
expressed as 
\begin{align}\label{toep11}
\mathrm{P}_{g,b,c}(\lambda_1 \le a) 
= \frac{1}{Z_{b,c}} s_{(a^b)}(1^{b}, q_1, \ldots, q_c). 
\end{align}
\end{theorem}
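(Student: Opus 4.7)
The plan is to reduce the statement to summing the formula from Proposition~\ref{gme} over the relevant partitions and then applying identity~\eqref{gx}.

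First, I would observe that since the $g$-measure is supported on $\mathrm{PP}(\infty, b, c)$, the shape $\lambda$ automatically satisfies $\ell(\lambda) \le b$. Consequently, imposing the additional constraint $\lambda_1 \le a$ is equivalent to requiring $\lambda \subset (a^b)$. Thus
\begin{align*}
\mathrm{P}_{g,b,c}(\lambda_1 \le a) = \sum_{\lambda \subset (a^b)} \mathrm{P}_{g,b,c}(\mathrm{sh}(\pi) = \lambda).
\end{align*}

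Next, I would invoke Proposition~\ref{gme} to substitute $\mathrm{P}_{g,b,c}(\mathrm{sh}(\pi) = \lambda) = Z_{b,c}^{-1}\, g_\lambda(q_1, \ldots, q_c)$, obtaining
\begin{align*}
\mathrm{P}_{g,b,c}(\lambda_1 \le a) = \frac{1}{Z_{b,c}} \sum_{\lambda \subset (a^b)} g_\lambda(q_1, \ldots, q_c).
\end{align*}
Finally, I would apply the identity~\eqref{gx} with $\mathbf{x} = (q_1, \ldots, q_c)$, namely $\sum_{\lambda \subset (a^b)} g_\lambda(\mathbf{x}) = s_{(a^b)}(1^b, \mathbf{x})$, which immediately yields the claimed formula~\eqref{toep11}.

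There is no real obstacle here: all the substantive work has already been done upstream. The interesting step is the coincidence $g_{(a^b)}(\mathbf{x}) = s_{(a^b)}(1^{b-1}, \mathbf{x})$ in Lemma~\ref{l1}, which, together with the branching rule for $g$, produced~\eqref{gx}; after that, the proof is a one-line combination of Proposition~\ref{gme} with summation over $\lambda \subset (a^b)$.
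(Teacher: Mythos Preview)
Your proof is correct and matches the paper's own argument essentially line for line: sum Proposition~\ref{gme} over $\lambda \subset (a^b)$ and then collapse the sum to a single Schur evaluation. The only cosmetic difference is that the paper writes the last step in two stages (first the branching rule $\sum_{\lambda\subset(a^b)} g_\lambda(\mathbf{x}) = g_{(a^b)}(1,\mathbf{x})$, then Lemma~\ref{l1}), whereas you invoke the pre-packaged identity~\eqref{gx} that combines both.
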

\begin{proof}
For the distribution of $\lambda_1$ we now have
\begin{align*}
\mathrm{P}_{g,b,c}(\lambda_1 \le a) &= \sum_{\lambda \subset (a^b)} \mathrm{P}_{g,b,c}(\lambda) \\
&= \frac{1}{Z_{b,c}} \sum_{\lambda \subset (a^b)} g_{\lambda}(q_1, \ldots, q_c)\\ 
&= \frac{1}{Z_{b,c}} g_{(a^b)}(1,q_1, \ldots, q_c)\\ 
&= \frac{1}{Z_{b,c}} s_{(a^b)}(1^b,q_1, \ldots, q_c),
\end{align*}
where we used  Lemma~\ref{l1} in the last identity.
\end{proof}

\begin{corollary} 
The distribution of $\lambda_1$ can be 
expressed via the Toeplitz determinant
\begin{align}\label{toep}
\mathrm{P}_{b,c}(\lambda_1 \le a) 
=  \frac{1}{Z_{b,c}}\det\left[\hat\varphi_{i - j}\right]_{i,j=1}^{a},
\end{align}
where 
$$\hat\varphi_{k} = \sum_{\ell = 0}^{\infty} \binom{b}{\ell + k} 
e_{\ell}(q_1, \ldots, q_c)\quad \text{ or }\quad
 \varphi(z) := \sum_{k \in \mathbb{Z}} \hat\varphi_k z^k = (1 + z)^b \prod_{i = 1}^c (1 + q_i z^{-1}).$$
\end{corollary}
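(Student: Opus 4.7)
The plan is to combine Theorem~\ref{fpd} with the dual (elementary) Jacobi--Trudi identity for the Schur polynomial at a rectangular shape, and then to recognize the resulting $a\times a$ determinant as a Toeplitz determinant by a short generating-function manipulation.

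First, Theorem~\ref{fpd} reduces the target identity to showing
$$s_{(a^b)}(1^b, q_1, \ldots, q_c) = \det\bigl[\hat\varphi_{i-j}\bigr]_{i,j=1}^a.$$
Since the conjugate of $(a^b)$ is $(b^a)$, the classical Jacobi--Trudi identity in $e$-form gives
$$s_{(a^b)}(\mathbf{y}) = \det\bigl[e_{b-i+j}(\mathbf{y})\bigr]_{i,j=1}^a,$$
so I would specialize $\mathbf{y} = (1^b, q_1, \ldots, q_c)$ and use the generating function
$$f(z) := \sum_{k} e_k(1^b, q_1, \ldots, q_c)\, z^k = (1+z)^b \prod_{i=1}^c (1+q_i z).$$

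The crux is then to match the entries of this matrix with the Laurent coefficients of $\varphi$. For this I would exploit the symmetry
$$z^b f(z^{-1}) = z^b(1+z^{-1})^b \prod_{i=1}^c(1+q_i z^{-1}) = (1+z)^b \prod_{i=1}^c(1+q_i z^{-1}) = \varphi(z),$$
using $z^b(1+z^{-1})^b = (1+z)^b$. Extracting coefficients,
$$\hat\varphi_{i-j} = [z^{i-j}]\varphi(z) = [z^{i-j-b}]f(z^{-1}) = [z^{b-i+j}]f(z) = e_{b-i+j}(1^b, q_1, \ldots, q_c),$$
so the Jacobi--Trudi matrix coincides entry-by-entry with the Toeplitz matrix $[\hat\varphi_{i-j}]_{i,j=1}^a$, and the equality of determinants follows.

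I do not anticipate any real obstacle: the argument is entirely formal once Theorem~\ref{fpd} is in hand, and the only minor bookkeeping is the $z\mapsto z^{-1}$ substitution that converts the elementary symmetric function generating series into the Toeplitz symbol $\varphi(z)$. The equivalence of the two given forms of $\hat\varphi_k$ (as a binomial sum and as a Laurent coefficient of $\varphi$) is itself immediate from expanding $(1+z)^b \prod_i(1+q_iz^{-1})$.
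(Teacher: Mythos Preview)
Your proof is correct and follows essentially the same route as the paper: apply Theorem~\ref{fpd}, then the $e$-form Jacobi--Trudi identity for $s_{(a^b)}$, and identify the entries $e_{b-i+j}(1^b,q_1,\ldots,q_c)$ with $\hat\varphi_{i-j}$. The only cosmetic difference is that the paper verifies this last identification by expanding $e_{b-i+j}(1^b,q_1,\ldots,q_c)=\sum_{\ell}\binom{b}{b-i+j-\ell}e_\ell(q_1,\ldots,q_c)$ directly, whereas you package the same computation as the generating-function identity $z^b f(z^{-1})=\varphi(z)$.
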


\begin{proof}
Using the Jacobi-Trudi identity for Schur polynomials we obtain 
$$
s_{(a^b)}(1^b, q_1, \ldots, q_c) = \det[e_{b - i + j}(1^b, q_1, \ldots, q_c)]_{i,j = 1}^{a}
$$
Note that 
$$
e_{b - i + j}(1^b, q_1, \ldots, q_c) = \sum_{\ell \ge 0} \binom{b}{b - i + j - \ell} e_{\ell}(q_1, \ldots, q_c) = \sum_{\ell \ge 0} \binom{b}{\ell + i - j} e_{\ell}(q_1, \ldots, q_c) = \hat\varphi_{i - j}
$$
as needed.
\end{proof}

\begin{remark}
From the Toeplitz determinant we can also obtain {\it Fredholm determinant} formula of the form $\det[I - K]$ for some kernel $K$ using the Borodin-Okounkov formula \cite{bo}.
\end{remark}

\begin{remark}
Let $\Lambda$ be the ring of symmetric functions. A specialization (ring homomorphism) $\rho : \Lambda \to \mathbb{R}$ 
is called {\it $g$-positive} if $\rho(g_{\lambda}) \ge 0$ for all $\lambda$. In light of the $g$-measure one can be interested in characterizations of $g$-positive specializations. See \cite{dy3} for a class of $g$-positive specializations and related questions. 
\end{remark}

\section{Corner growth model}\label{corner}
Consider a 
random matrix $W = (w_{ij})$ with i.i.d. entries $w_{ij}$ that are geometrically distributed with parameter $q$, 
i.e. $$\mathrm{P}(W_{ij} = k) = (1 - q) q^k, \quad k \in \mathbb{N}.$$ 

For $m,n \ge 1$, recall that the {last passage times} $G(m,n)$ are defined as
$$
G(m,n) = \max_{\Pi} \sum_{(i,j) \in \Pi} w_{i j},
$$
where the maximum is taken over monotone paths $\Pi$ from (1,1) to $(m,n)$. Note that we have the recurrence relation
$$
G(i,j) = \max(G(i-1, j), G(i,j-1)) + w_{ij}, \quad i, j \ge 1
$$
(one needs to set the boundary conditions $G(i,0) = G(0,i) = 0$).

The {\it corner growth model} can be viewed as evolution of a random Young diagram $Y(t)$ given at time $t$ by the region
$$Y(t) = \{(i,j) \in \mathbb{N}^2 : G(i,j) \le t \}.$$
One can see that $Y(t+1)$ adds boxes to some corners of $Y(t)$. 
We refer to \cite{sep, romik} for more on the topic. 

The main result of this section is the following.

\begin{theorem}\label{main}
Let $\lambda = (\lambda_1, \ldots, \lambda_b)$ be a partition with at most $b$ parts. 
We have
\begin{align*}
\mathrm{P}(G({b,c}) = \lambda_1, \ldots, G(1,c) = \lambda_b) = \mathrm{P}_{g,b,c}(\lambda) = (1-q)^{bc} g_{\lambda}(\underbrace{q, \ldots, q}_{c \text{ times}}), %
\end{align*}
where $q_i = q$ for all $i \in [1,c]$ in the $g$-measure.
\end{theorem}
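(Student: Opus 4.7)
The plan is to apply the bijection $\Phi^{-1}$ of Section~\ref{sbij} to a $180^\circ$-rotated copy of the (truncated) corner growth matrix: the resulting plane partition has shape exactly $(G(b,c),G(b-1,c),\ldots,G(1,c))$, and its distribution is the $g$-measure with $q_i\equiv q$.

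Concretely, let $W^{(b,c)}$ denote the top-left $b\times c$ submatrix of $W$ and define the rotated matrix $W^{\mathrm{rot}}_{i\ell}:=w_{b-i+1,\,c-\ell+1}$. Set $\pi:=\Phi^{-1}(W^{\mathrm{rot}})\in\mathrm{PP}(\infty,b,c)$. By Lemma~\ref{phip}(ii), $\mathrm{sh}(\pi)_k$ equals the maximum of $\sum w^{\mathrm{rot}}_{i\ell}$ over monotone paths from $(k,1)$ to $(b,c)$. Reading such a path backward through the rotation produces a monotone path from $(1,1)$ to $(b-k+1,c)$ in $W$ with the same total weight. Hence $\mathrm{sh}(\pi)_k=G(b-k+1,c)$ \emph{pointwise}, so
\[
(\mathrm{sh}(\pi)_1,\ldots,\mathrm{sh}(\pi)_b)=(G(b,c),G(b-1,c),\ldots,G(1,c))
\]
as random vectors, and the problem reduces to computing $\mathrm{P}(\mathrm{sh}(\pi)=\lambda)$.

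Since $W^{\mathrm{rot}}$ has i.i.d.\ geometric entries, $\mathrm{P}(W^{\mathrm{rot}}=D)=(1-q)^{bc}q^{|D|}$ with $|D|=\sum d_{i\ell}$. From the very definition of $\Phi$,
\[
|\Phi(\pi)|=\sum_{i,\ell}|\{j:\pi_{ij}=\ell>\pi_{i+1,j}\}|=|\mathrm{Des}(\pi)|.
\]
Summing over $\pi\in\mathrm{PP}(\infty,b,c)$ of shape $\lambda$ yields
\[
\mathrm{P}(\mathrm{sh}(\pi)=\lambda)=(1-q)^{bc}\sum_{\substack{\mathrm{sh}(\pi)=\lambda\\ \pi_{ij}\le c}}q^{|\mathrm{Des}(\pi)|}=(1-q)^{bc}\,g_{\lambda}(q,\ldots,q),
\]
the last equality using the combinatorial definition of $g_{\lambda}$ together with $\sum_i c_i(\pi)=|\mathrm{Des}(\pi)|$ (each descent in column $j$ records a distinct value appearing in that column, and conversely). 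Comparing with Proposition~\ref{gme} and $Z_{b,c}=(1-q)^{-bc}$ from Corollary~\ref{znorm} identifies this with $\mathrm{P}_{g,b,c}(\lambda)$.

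The step I expect to require the most care is the orientation/relabeling in the second paragraph: Lemma~\ref{phip}(ii) naturally describes $\lambda_k$ via paths from $(k,1)$ to $(b,c)$, whereas the corner growth times $G(m,c)$ use paths from $(1,1)$ to $(m,c)$. Inserting the $180^\circ$-rotation before applying $\Phi^{-1}$ turns this into a pathwise identity (not merely an equality in distribution), which is what allows a clean reduction to the $g$-measure. Once that alignment is in hand, the rest is routine unpacking of the definitions of $\Phi$, $g_\lambda$, and the $g$-measure.
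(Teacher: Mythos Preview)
Your argument is correct and is essentially the paper's proof: it combines Lemma~\ref{phip}(ii) with the observation (packaged in the paper as Lemma~\ref{gen}) that feeding an i.i.d.\ geometric matrix into $\Phi^{-1}$ produces the $g$-measure, and then invokes Proposition~\ref{gme}. The explicit $180^\circ$ rotation you insert is a useful clarification of a step the paper's one-line proof leaves implicit, relying on the i.i.d.\ symmetry of $W$ to reconcile paths from $(k,1)$ to $(b,c)$ in Lemma~\ref{phip}(ii) with paths from $(1,1)$ to $(b-k+1,c)$ in the definition of $G$.
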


To prove this theorem we need the following lemma. 

\begin{lemma}\label{gen}
Let $D = (D_{i j})_{i,j = 1}^{b,c}$ be a random $b \times c$ matrix, where $D_{i j}$ are i.i.d. geometrically distributed random variables with parameter $q$.  
Then 
$$\mathrm{P}(\Phi^{-1}(D) = \pi)= \mathrm{P}_{g,b,c}(\pi),$$
where $q_i = q$ for all $i \in [1,c]$ in the $g$-measure.
\end{lemma}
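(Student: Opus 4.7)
The strategy is to directly compute both sides and compare, using the fact that $\Phi$ is a bijection (Theorem~\ref{bij}, extended to the unbounded case by letting $a \to \infty$) between $\mathrm{PP}(\infty,b,c)$ and the set of $b \times c$ $\mathbb{N}$-matrices. The key observation is that the weight $q^{d_{ij}}$ on individual matrix entries telescopes into the descent weight on the plane partition side.

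First I would reduce the left-hand side to a single-term sum. Since $\Phi$ is a bijection, the event $\{\Phi^{-1}(D) = \pi\}$ is the singleton event $\{D = \Phi(\pi)\}$. Writing $\Phi(\pi) = (d_{i\ell})$, the i.i.d.\ geometric assumption then gives immediately
\[
\mathrm{P}(\Phi^{-1}(D) = \pi) = \prod_{i,\ell} (1-q)\, q^{d_{i\ell}} = (1-q)^{bc}\, q^{\sum_{i,\ell} d_{i\ell}}.
\]

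The main (and only nontrivial) step is to identify the exponent $\sum_{i,\ell} d_{i\ell}$ with $\mathrm{des}(\pi)$. This falls out of the definition \eqref{ldes}: the sets $D_{i\ell} = \{j : \pi_{ij} = \ell > \pi_{i+1,j}\}$ are pairwise disjoint as $(i,\ell)$ varies, and their union is exactly $\mathrm{Des}(\pi) = \{(i,j) : \pi_{ij} > \pi_{i+1,j}\}$, since any descent position $(i,j)$ determines the unique value $\ell = \pi_{ij}$. Hence
\[
\sum_{i,\ell} d_{i\ell} = \sum_{i,\ell} |D_{i\ell}| = |\mathrm{Des}(\pi)| = \mathrm{des}(\pi).
\]

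Finally, I would compare this with $\mathrm{P}_{g,b,c}(\pi)$. Specializing all $q_i = q$ in the definition of the $g$-measure gives $\prod_{(i,j)\in\mathrm{Des}(\pi)} q_{\pi_{ij}} = q^{\mathrm{des}(\pi)}$, and by Corollary~\ref{znorm} the normalization becomes $Z_{b,c} = (1-q)^{-bc}$. Therefore $\mathrm{P}_{g,b,c}(\pi) = (1-q)^{bc}\, q^{\mathrm{des}(\pi)}$, which matches the expression obtained above. I do not anticipate a serious obstacle here; the only thing that requires care is verifying that the descent sets $D_{i\ell}$ partition $\mathrm{Des}(\pi)$, which is immediate from the bullet-point properties stated just before \eqref{piq}.
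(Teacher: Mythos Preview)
Your proof is correct and follows essentially the same approach as the paper: both compute each side explicitly and show they equal $(1-q)^{bc}\,q^{\mathrm{des}(\pi)}$, with the identification $\sum_{i,\ell} d_{i\ell} = \mathrm{des}(\pi)$ being the only substantive step. The paper's version is terser and leaves that identification implicit in the final chain of equalities, whereas you spell it out via the partition of $\mathrm{Des}(\pi)$ into the sets $D_{i\ell}$.
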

\begin{proof}
Note firstly that for $q_i = q$ we have
$$
\mathrm{P}_{g,b,c}(\pi) = 
(1 - q)^{bc} q^{\mathrm{des}(\pi)}. 
$$
Let $d_{i \ell} = |\{j: \pi_{ij} = \ell > \pi_{i+1 j} \}|$, i.e. $(d_{i \ell})= \Phi(\pi)$. Then 
$$
\mathrm{P}(D= \Phi(\pi)) = \prod_{i = 1}^{b} \prod_{\ell = 1}^{c} \mathrm{P}(D_{i \ell} = d_{i \ell}) = \prod_{i = 1}^b \prod_{\ell = 1}^c (1 - q)\, q_{}^{d_{i \ell}} = \mathrm{P}_{b,c}(\pi)
$$
as needed.
\end{proof}

\begin{proof}[Proof of Theorem~\ref{main}]
Follows now from Lemma~\ref{phip} (ii), Lemma~\ref{gen} and Proposition~\ref{gme}.
\end{proof}

We now list some implications of Theorem~\ref{main}. 
\begin{corollary}[Marginal distributions]\label{marg}
Let $k \in [1,b]$. We have
$$
\mathrm{P}(G(b-k +1,c) \le n) = \mathrm{P}_{g,b,c}(\lambda_k \le n).
$$
\end{corollary}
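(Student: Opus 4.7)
The plan is to read this off directly from Theorem~\ref{main}, which establishes the joint identification
\[
(G(b,c),\, G(b-1,c),\, \ldots,\, G(1,c)) \;\stackrel{d}{=}\; (\lambda_1,\, \lambda_2,\, \ldots,\, \lambda_b)
\]
where on the right-hand side $\lambda$ is distributed according to $\mathrm{P}_{g,b,c}$ with all $q_i = q$. In particular, matching the $k$-th coordinates, $G(b-k+1,c)$ has the same distribution as $\lambda_k$.

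So the first step is simply to invoke Theorem~\ref{main} and then marginalize. Concretely, sum the joint distribution identity over all partitions $\lambda=(\lambda_1\ge\cdots\ge \lambda_b\ge 0)$ with $\lambda_k \le n$: the left side collapses to $\mathrm{P}(G(b-k+1,c)\le n)$ (after also noting that the monotonicity $\lambda_1\ge\cdots\ge\lambda_b$ is automatic on the $G$-side, since $G(m,c)$ is weakly increasing in $m$, so no extra constraint is lost in marginalization), while the right side is by definition $\mathrm{P}_{g,b,c}(\lambda_k\le n)$.

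There is essentially no obstacle beyond bookkeeping; the only point worth checking is the indexing convention, namely that $G(i,c)$ is weakly increasing in $i$ so that the tuple $(G(b,c),\ldots,G(1,c))$ is automatically a weakly decreasing sequence (a partition), which is the same shape as $(\lambda_1,\ldots,\lambda_b)$. This makes the marginalization step clean, and the corollary follows.
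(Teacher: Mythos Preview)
Your proof is correct and is exactly the argument the paper has in mind: the corollary is stated immediately after Theorem~\ref{main} with no separate proof, so the intended justification is precisely to marginalize the joint identity $\mathrm{P}(G(b,c)=\lambda_1,\ldots,G(1,c)=\lambda_b)=\mathrm{P}_{g,b,c}(\lambda)$ onto the $k$-th coordinate. Your observation that $G(m,c)$ is weakly increasing in $m$ (since $w_{ij}\ge 0$) is the right way to confirm that the left-hand side ranges over partitions and that the marginalization is clean.
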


In particular, $\lambda_b \sim G(1,c)$ has binomial distribution with parameters $c$, $q$. For $\lambda_1 \sim G(b,c)$  
combining with Theorem~\ref{fpd} we obtain.
\begin{corollary}
We have 
$$
\mathrm{P}(G(b,c) \le a) = (1-q)^{bc} s_{(a^b)}(1^b, \underbrace{q, \ldots, q}_{c \text{ times}})
$$
as well as Toeplitz determinantal expressions \eqref{toep}. 
\end{corollary}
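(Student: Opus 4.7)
The plan is to chain together three results already established in the preceding sections: the marginal identification from Corollary~\ref{marg}, the explicit formula for the distribution of the first part $\lambda_1$ from Theorem~\ref{fpd}, and the partition function evaluation from Corollary~\ref{znorm}.

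First I would set $k=1$ in Corollary~\ref{marg} to convert the last passage time probability into a $g$-measure statement:
\[
\mathrm{P}(G(b,c) \le a) \;=\; \mathrm{P}_{g,b,c}(\lambda_1 \le a).
\]
Next I would apply Theorem~\ref{fpd} with the homogeneous specialization $q_i = q$ for all $i \in [1,c]$, which yields
\[
\mathrm{P}_{g,b,c}(\lambda_1 \le a) \;=\; \frac{1}{Z_{b,c}}\, s_{(a^b)}(1^b, \underbrace{q,\ldots,q}_{c}).
\]
Finally, by Corollary~\ref{znorm}, the normalization in this specialization is $Z_{b,c} = (1-q)^{-bc}$. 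Substituting yields the claimed identity. The Toeplitz determinantal expression is then immediate: it is exactly the content of the corollary displayed just after Theorem~\ref{fpd} (equation~\eqref{toep}) applied with $q_i = q$, since that corollary was proved for arbitrary parameters $q_1, \ldots, q_c$ and we are merely specializing them all to $q$.

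Since every ingredient is already in place, there is essentially no obstacle — this is a bookkeeping corollary that packages the corner growth interpretation (Theorem~\ref{main}) together with the $g$-measure first-part formula and the explicit partition function. The only point to check carefully is that the marginal indexing in Corollary~\ref{marg} matches: one wants $\lambda_1$ (the largest part), which corresponds precisely to $k=1$ and hence to $G(b,c)$, consistent with $G(b,c)$ being the maximum last passage time to the far corner of the $b \times c$ rectangle.
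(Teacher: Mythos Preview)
Your proposal is correct and follows exactly the paper's own (essentially implicit) argument: the paper states the corollary right after noting ``For $\lambda_1 \sim G(b,c)$ combining with Theorem~\ref{fpd} we obtain,'' which is precisely the chain Corollary~\ref{marg} (at $k=1$) $\to$ Theorem~\ref{fpd} $\to$ Corollary~\ref{znorm} that you spell out. There is nothing to add.
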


More generally using the Jacobi-Trudi-type determinantal formula \eqref{jt} we get
\begin{corollary}
We have the following formula
$$
\mathrm{P}(G(b,c) = \lambda_1,\ldots, G(1,c) = \lambda_b) = (1-q)^{bc} \det\left[e_{\lambda'_i - i + j}(1^{\lambda'_i - 1}, \underbrace{q, \ldots, q}_{c \text{ times}}) \right]_{i,j = 1}^{\lambda_1}
$$
\end{corollary}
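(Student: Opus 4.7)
The plan is to chain together two results that are already available: Theorem~\ref{main} (which converts the joint distribution of the row maxima $G(b,c),\ldots,G(1,c)$ into the $g$-measure, and hence into a value of $g_\lambda$) and the Jacobi--Trudi identity \eqref{jt} for the dual Grothendieck polynomial $g_\lambda$.

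First, I would invoke Theorem~\ref{main} with the specialization $q_i = q$ for every $i \in [1,c]$, which gives
\[
\mathrm{P}(G(b,c) = \lambda_1, \ldots, G(1,c) = \lambda_b) = (1-q)^{bc}\, g_\lambda(\underbrace{q,\ldots,q}_{c\text{ times}}).
\]
Next, I would apply the Jacobi--Trudi identity \eqref{jt},
\[
g_\lambda(\mathbf{x}) = \det\!\left[ e_{\lambda'_i - i + j}(1^{\lambda'_i - 1}, \mathbf{x}) \right]_{1 \le i,j \le \lambda_1},
\]
with the alphabet $\mathbf{x} = (\underbrace{q,\ldots,q}_{c})$. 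Substituting into the previous display yields exactly the claimed determinantal formula.

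There is essentially no obstacle here: both ingredients are already established earlier in the paper, and the proof is a one-line combination (so really this corollary is just an unpacking of Theorem~\ref{main} via \eqref{jt}, paralleling the preceding corollary that unpacked it via Lemma~\ref{l1} in the rectangular case). The only minor care required is to note that the specialization of the alphabet in \eqref{jt} is consistent: the arguments of $e_{\lambda'_i - i + j}$ are $1^{\lambda'_i - 1}$ together with the $c$-fold repetition of $q$, which matches the right-hand side of the statement verbatim.
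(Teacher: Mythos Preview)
Your proof is correct and matches the paper's approach exactly: the paper also obtains this corollary by combining Theorem~\ref{main} with the Jacobi--Trudi identity~\eqref{jt}.
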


Let us now discuss several results about the corner growth model that imply results on the shape $\lambda$ from the $g$-distribution. 

It was proved in \cite{joh1} that for $x > 0$ 
we have almost sure convergence to deterministic {limit shape}
\begin{align*}
\lim_{b \to \infty} \frac{1}{b}{G(\lfloor x b \rfloor, b)} \to \psi(x) = (1- q)^{-1} (qx + q + 2\sqrt{qx})
\end{align*}
and {\it fluctuations} around this limit shape are of order $b^{1/3}$ given by  
\begin{align*}
\lim_{b \to \infty} \mathrm{P}\left(\frac{G(\lfloor x b \rfloor, b) - \psi(x) b}{\sigma(x) b^{1/3}} \le t \right) = F_{\mathrm{TW}}(t),
\end{align*} 
where $\sigma(x) = (1-q)^{-1}(qx)^{1/6}(\sqrt{x} + \sqrt{q})^{2/3} (1 + \sqrt{qx})^{2/3}$ and $F_{\mathrm{TW}}(t)$
is the {\it Tracy-Widom distribution}, the limiting distribution of the properly scaled largest eigenvalue in GUE \cite{tw}. Therefore, combining these with Corollary~\ref{marg}, $\lambda$ from the $g$-distribution has limit shape
$$
\lim_{b \to \infty} \frac{\lambda_{\lfloor x b\rfloor}}{b} \to \psi(1 - x) 
$$
where $x \in (0,1]$, $c = b$; and similar fluctuations.

It was proved in \cite{bar} that for fixed $b$, as $c \to \infty$, the variables 
\begin{align}\label{gc}
\frac{G(k,c) - \frac{q}{1-q} c}{\frac{\sqrt{q}}{1-q} \sqrt{c}}, \quad k = 1, \ldots, b
\end{align}
jointly converge in distribution to largest eigenvalues of $b$ principal minors in $b \times b$ GUE matrix, which is known as {\it GUE corners process} (or GUE minors process \cite{jn}). 
Therefore, for $\lambda$  from the $g$-distribution we have
\begin{align}\label{gc1}
\frac{\lambda_{b - k + 1} - \frac{q}{1-q} c}{\frac{\sqrt{q}}{1-q} \sqrt{c}}, \quad k = 1, \ldots, b
\end{align}
jointly converge in distribution to largest eigenvalues in $b \times b$ GUE corners process. It is also known \cite{gw} that as $c \to \infty$ the 
random variables \eqref{gc}
converge in distribution to the process
$$
\sup_{0 = t_0 < \cdots < t_k = 1} \sum_{i = 0}^{k - 1} (B_i(t_{i + 1}) - B_i(t_i)), \quad k = 1,2,\ldots
$$
where $B_i$ are independent standard Brownian motions. Hence the same holds for the random variables \eqref{gc1}. 

\begin{remark}\label{upd}
After this paper was written, more related works on probabilistic models for dual Grothendieck polynomials appeared in \cite{dy5, ms20}.
\end{remark}
\begin{remark}
The $g$-measure and Theorem~\ref{main} have higher-dimensional generalizations 
obtained recently in \cite{ay}.
\end{remark}
 
\section*{Acknowledgements}
I am grateful to Pavlo Pylyavskyy for many stimulating conversations on $K$-theoretic combinatorics. I am also grateful to Leonid Petrov for helpful comments and some references. I~am also thankful to the referees for careful reading of the text and many useful comments. 


\end{document}